\numberwithin{equation}{section}
\def\twodigits#1{\expandafter\@twodigits\csname c@#1\endcsname}
\def\@twodigits#1{%
  \ifnum#1<10 0\fi
  \number#1}
\AddEnumerateCounter{\twodigits}{\@twodigits}{10}
\newcommand{\KSigmaD}{\mathrm{K}_{D \Sigma } }
\newcommand{\KGammaD}{\mathrm{K}_{D \Gamma } }
\newcommand{\KSigmaOmegaTwo}{\mathrm{K}_{\Omega_{2} \Sigma} }
\newcommand{\KSigmaGamma}{\mathrm{K}_{\Gamma\Sigma} }
\newcommand{\KGammaSigma}{\mathrm{K}_{\Sigma\Gamma} }
\newcommand{\KGammaOmegaOneC}{\mathrm{K}_{\Omega_1^{\rm c}\Gamma} }
\newcommand{\KSigmaOmegaOneTwo}{\mathrm{K}_{\Omega_{12}\Sigma}}
\newcommand{\KGammaOmegaOneTwo}{\mathrm{K}_{\Omega_{12}\Gamma}}
\newcommand{\revAB}[1]{{#1}}
\newcommand{\revB}[1]{{#1}}
\newcommand{\revA}[1]{{#1}}
\theoremstyle{remark}
\newtheorem{remark}{Remark}[section]
\theoremstyle{plain}
\newtheorem{lemma}[remark]{Lemma}
\newtheorem{theorem}[remark]{Theorem}
\title{ An overlapping decomposition framework for  wave propagation in heterogeneous and unbounded media:
Formulation, analysis, algorithm,  and simulation}
\author{V. Dom\'{\i}nguez \thanks{Department of  Estad\'{\i}stica, Inform\'atica y Matem\'aticas, Universidad P\'{u}blica de Navarra, Tudela, Spain/ Institute for Advanced Materials (INAMAT), Pamplona, Spain. Email: victor.dominguez@unavarra.es}
\and M. Ganesh \thanks{Department Applied Mathematics and Statistics Department, Colorado School of Mines, Golden, CO, USA. Email: mganesh@mines.edu}
\and F.J. Sayas
\thanks {Department of Mathematical Sciences, University of Delaware, Newark, DE, USA.
 ~~~~~~~~~~~~~~~~~~~~~~~~~~~~~~~~~~~~~~~~~~~~~ Email: fjsayas@udel.edu}
}
\date{\today}
\DeclareMathOperator{\spann}{span}
\begin{document}
\maketitle 
\begin{abstract}
{\revAB{A natural medium for  wave propagation comprises a  coupled bounded heterogeneous region  and an  unbounded homogeneous free-space. Frequency-domain wave propagation models in the medium, such as the variable coefficient Helmholtz equation,  include a faraway decay  radiation condition (RC).   It is desirable to develop algorithms that  incorporate  the full physics of the heterogeneous
and unbounded medium wave propagation model, and avoid an approximation of  the  RC. 
In this work we first present and analyze an overlapping decomposition framework that is equivalent to the full-space  heterogeneous-homogenous  continuous  model,  governed by the Helmholtz equation with a spatially dependent refractive index  and the RC. Our novel overlapping framework allows the user to choose two free boundaries, and gain the  advantage of applying established  high-order finite and boundary element methods  (FEM and BEM) to simulate an equivalent coupled model.}}

{\revAB{The coupled model comprises  auxiliary interior bounded heterogeneous and exterior unbounded  homogeneous Helmholtz problems. A smooth boundary can be chosen for simulating  the exterior  problem using a spectrally accurate BEM, and a  simple  boundary can be used to setup a high-order FEM for the interior problem. Thanks to the spectral accuracy of the exterior  computational model, the
resulting coupled system in the overlapping region is relatively very small. 
Using the decomposed equivalent framework, we develop  a novel overlapping FEM-BEM algorithm for simulating   the acoustic or electromagnetic wave propagation in two dimensions. Our  FEM-BEM algorithm for the full-space model incorporates the RC exactly. Numerical experiments demonstrate the efficiency of the FEM-BEM approach for simulating smooth and non-smooth wave fields, with the latter induced by  a complex heterogeneous medium and a discontinuous refractive index.}}

\end{abstract}

\noindent{{\bf AMS subject classifications:} 65N30, 65N38, 65F10, 35J05}\\
\noindent{{\bf Keywords:} Heterogeneous, Unbounded, Wave Propagation, Finite/Boundary Element Methods.}

\newpage
~\label{sec:intro}
\section{Introduction}
{\revB{Wave propagation simulations, governed by the Helmholtz equation, in bounded heterogeneous and unbounded homogenous media are fundamental for numerous applications~\cite{KressColton,Ihlenburg:1998,nedlec:book}.}}

{\revB{Finite element methods (FEM) are efficient for  simulating the Helmholtz equation in a bounded heterogeneous  medium, say, $\Omega_0\subset \mathbb{R}^m$ ($m=2,3$). The  standard (non-coercive) variational  formulation of the variable coefficient Helmholtz equation in $H^1(\Omega_0)$~\cite{Ihlenburg:1998} has been widely used for developing and analyzing the sign-indefinite FEM,
see for example~\cite{BGP, het2, het3, mg2018, het7,  het5}. The open problem of developing a coercive variational formulation for the heterogeneous Helmholtz model was solved recently in~\cite{mg2019}, and an associated preconditioned sign-definite high-order FEM was also  established using direct and domain decomposition methods in~\cite{mg2019}.}

{\revB{For a large class of applications the wave propagation   occurs in the bounded heterogeneous medium and also in its complement, 
$\mathbb{R}^m \setminus \Omega_0$, the  exterior unbounded homogeneous medium. Using the fundamental solution, the constant coefficient Helmholtz equation exterior to $\Omega_0$ can be reformulated as an integral equation (IE) on the boundary of $\Omega_0$.
Algorithms for simulating the boundary IE (BIE) are known as boundary element methods (BEM). Several coercive and non-coercive BIE reformulations~\cite{KressColton,nedlec:book}  of the exterior Helmholtz model have been used to develop algorithms for the exterior homogeneous Helmholtz models, see for example the acoustic  BEM survey articles~\cite{bem-eng-sur, bem-math-sur}, respectively, by mathematical and engineering researchers,  each with over 400 references.}}

The exterior wave propagation BEM models lead to  dense complex algebraic systems,
and the standard variational formulation based interior wave FEM models lead to sparse complex systems with their eigenvalues 
\revB{in}  the
left half of the complex plane~\cite{sdparti, Moiola}. Developing efficient preconditioned iterative solvers for such  
systems  \revB{has} also dominated research activities over the last two decades~\cite{gander_zhang}, in conjunction
with efficient implementations  using multigrid and domain decomposition techniques, 
see~\cite{mg2017, mg2018} and references therein.

For applications that require solving both the interior  heterogenous and exterior homogeneous problems, 
various couplings of the FEM and BEM algorithms with appropriate conditions on {\em polygonal interfaces}  have also been investigated in the literature\cite{BrJo:1979,MR974843,BrJoNe:1978}. The review article~\cite{sayas} describes some theoretical validations of the coupling approaches considered in the earlier literature 
and delicate choices of the coupling interface. The coupling methods in~\cite{BrJo:1979,MR974843,BrJoNe:1978, HanNew, sayas}  lead to very large algebraic systems with both dense and sparse structures.  For wave propagation models, given the complexity involved in even separately solving the FEM and BEM algebraic systems, it is efficient 
to avoid large combined dense and sparse structured systems arising from the coupling methods in~\cite{BrJo:1979,MR974843,BrJoNe:1978, HanNew, sayas}. 

Such complicated-structured coupled large-scale systems can be avoided, for the Helmholtz PDE interior and exterior 
problems,  using the approach proposed in~\cite{Kirsch2} and recently further explored  in~\cite{GaMor:2016} using high-order elements for a class of applications with complex heterogeneous structures. 
The FEM-BEM algorithms in~\cite{Kirsch2, GaMor:2016} are based on the idea of using a  non-overlapping {\em smooth interface} to couple the interior and exterior solutions. As described in~\cite[Section~6]{GaMor:2016},
there are several open mathematical analysis problems remain to be solved in the coupling 
and FEM-BEM framework of~\cite{Kirsch2, GaMor:2016}.

The choice of smooth interface in the
FEM-BEM algorithms of~\cite{Kirsch2, GaMor:2016} is crucial because the methods require solving
several interior and exterior wave problems to setup the interface condition. In particular, the number of FEM and BEM problems
to be solved is  twice \revAB{the number of}  degrees of freedom  required to approximate the unknown interface function.
The interface function can be approximated by a few degrees of freedom only on  smooth interfaces.
Efficient spectrally accurate BEM algorithms have been developed for simulating scattered waves exterior
to smooth  boundaries  in two and three dimensional domains~\cite{Bds2013,377360, KressColton, ganesh:high-order}.
However for  standard interior FEM algorithms, it is desirable to have simple polygonal/polyhedral boundaries, and
in particular those with right angles, which
\revB{facilitate the} development and  implementation of high-order FEM algorithms. 
 
To this end, we develop an equivalent framework for the heterogeneous and unbounded
region wave propagation model with two artificial  interfaces. In particular, our novel FEM-BEM framework is based  on an interior smooth interface 
$\Gamma$ for simulating scattered exterior waves using a spectrally accurate Nystr\"om BEM, and an exterior simple polygonal/polyhedral interface $\Sigma$  for the efficient high-order FEM simulation of the absorbed interior waves. In Figure~\ref{fig:01}, we
sketch the resulting overlapped decomposition of a heterogeneous and  unbounded medium in which the absorbed and scattered waves are 
induced by an input incident wave $u^{\rm inc}$. 

\begin{figure}[!ht]
 \centerline{\includegraphics[width=0.38\textwidth]{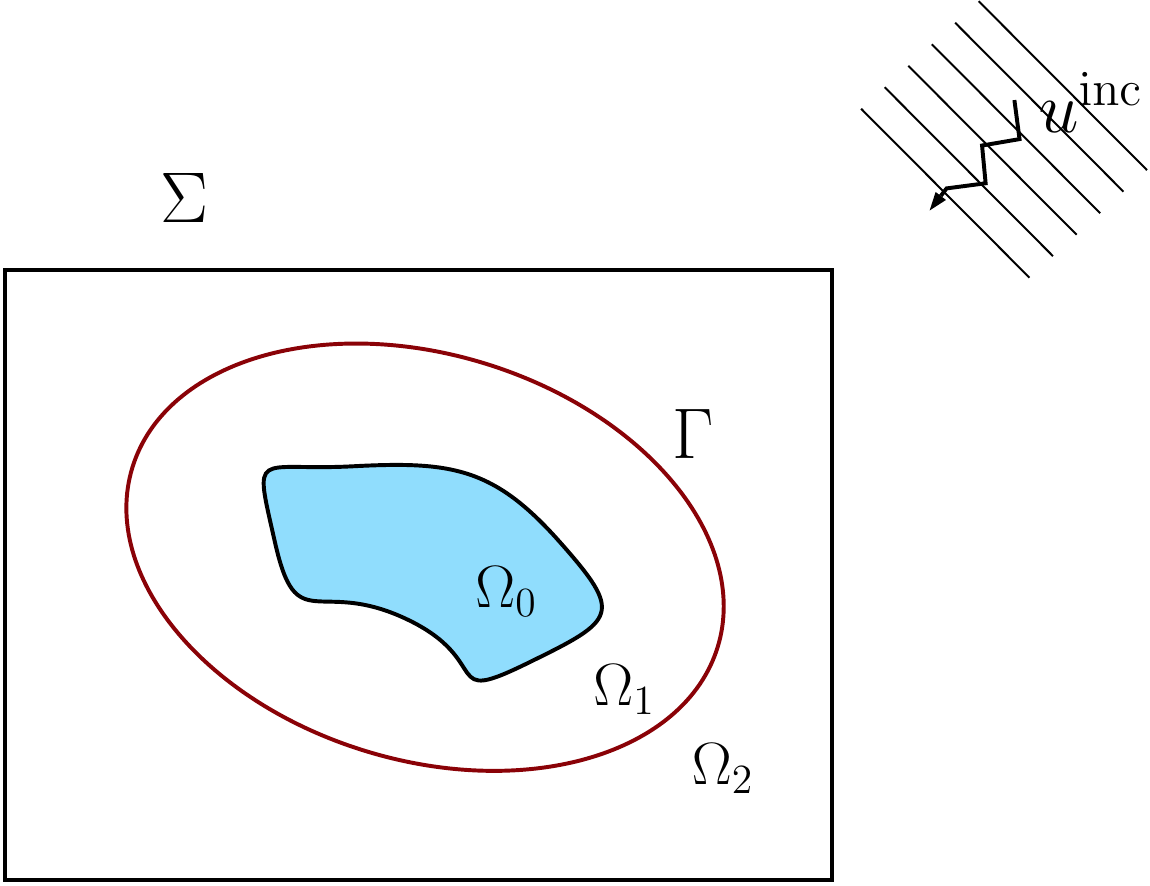}}
 \vspace{-0.1in}
  \caption{\label{fig:01} A model configuration with an input incident wave $u^{\rm inc}$ impinging on a heterogeneous medium $\Omega_0$. 
  The artificial boundaries in our decomposition framework for the auxiliary bounded (FEM) and unbounded (BEM) models are $\Sigma$ and $\Gamma$, respectively. The bounded  domain  for the FEM is $\Omega_2$ (with  boundary $\Sigma$), and  the unbounded   region  for the BEM is $\mathbb{R}^m \setminus \overline{\Omega}_1$
  (exterior to the smooth  interface $\Gamma$). 
  The domain $\Omega_1$ (with  boundary $\Gamma$) is chosen so that   $\overline{\Omega}_0\subset \Omega_1\subset\Omega_2$, and the overlapping region in the framework, to match the FEM and BEM solutions, is  $(\mathbb{R}^m \setminus \overline{\Omega}_1)\cap \Omega_2$.}
\end{figure}

The decomposition facilitates the application of efficient  high-order FEM algorithms
in the interior polygonal/polyhedral domain $\Omega_2$, that contains
the heterogeneous region $\Omega_0 \subset \overline{\Omega}_1$.
The unbounded exterior region $\mathbb{R}^m \setminus \overline{\Omega}_1$ does not include
the heterogeneity  and has a smooth boundary  $\Gamma$. 
It therefore supports spectrally accurate  BEM algorithms to simulate exterior scattered waves, and also exactly preserves the radiation condition (RC), even in the computational model.

In addition, the decomposition framework provides an analytical integral representation of the far-field using the scattered field, and hence 
our high-order FEM-BEM model {provides} relatively accurate approximations of the far-field arising
from the heterogeneous model. For inverse wave models,
accurate modeling of the far-field plays a crucial role in the  identification of   unknown 
wave propagation configuration properties   from far-field  measurements~\cite{bagheri,KressColton,gh:bayesian}.

Our approach in this article  is related to some  ideas presented in \cite{CeDomSay:2004,DomSay:2007,MR2292079}. 
The choice of two artificial boundaries leads to two bounded domains $\overline{\Omega}_0\subset\Omega_1\subset\Omega_2$ and  an overlapping  region  between $\Omega_1^{\rm c} =\mathbb{R}^m\setminus \overline{\Omega}_1$ and $\Omega_2$.  We prove that, under appropriate restrictions of
the scattered and absorbed fields in the overlapping region $\Omega_{12} (:= \Omega_1^{\rm c} \cap \Omega_2) $, our decomposed model is equivalent to the original Helmholtz model in the full space $\mathbb{R}^m$.
The  unknowns in our decomposed framework, which exactly incorporates the RC, are: (a) the trace of the scattered wave on $\Gamma$ 
that will yield  the solution in the unbounded domain $\Omega_1^{\rm c}$, through a boundary layer potential ansatz of the scattered field; (b) the trace of the total wave in the boundary $\Sigma$ of $\Omega_2$, that will provide the Dirichlet data to determine the total absorbed wave in the bounded domain 
$\Omega_2$. These properties will play a crucial role in designing and implementing  our high-order FEM-BEM  algorithm.

The FEM-BEM numerical algorithm can be  discerned at this point: It comprises approximating the absorbed wave field in a finite 
dimensional space using an FEM spline ansatz in the bounded domain $\Omega_2$, and by a BEM ansatz for the scattered field  in the unbounded region, 
exterior to $\Gamma$, and these  fields are constrained to (numerically) coincide on the overlapping domain $\Omega_{12}$, and hence on the
interface boundaries.  Since these artificial boundaries can be freely chosen, we can ensure  a bounded simple polygonal/polyhedral domain, more suitable for high-order FEM, and an unbounded region with a smooth boundary for spectrally accurate BEM.
In particular, the framework brings the best of the two numerical (FEM and BEM) worlds to compute the  fields accurately  for the
full heterogeneous model problem, without the need to truncate the unbounded  wave propagation region and approximate the RC.

The algorithmic construction and solving of the interface  linear system, which determines  key unknowns of the model on the interface boundaries (that is, the ansatz coefficients of the trace of the FEM and BEM solutions),   is challenging. 
However, important  properties of the continuous problem, such as a compact perturbation of the identity, are inherited by the numerical scheme. 
 \revA{Consequently, the  system of linear equations for the interface unknowns is very well conditioned}. 
 Such properties,  in conjunction with a cheaper matrix-vector multiplication for the underlying matrix, 
support the  use of  iterative solvers such as GMRES \cite{MR848568,MR1990645} to compute the ansatz coefficients. 
Major computational aspects of our high-order FEM and BEM discretizations in the framework
are independent and hence the underlying linear systems can be solved, {\em a priori}, by iterative Krylov methods.  
\revA{We show that the number of GMRES iterations, to solve the interface system, is   independent of various levels of discretization for a chosen frequency of the model.
For increasing frequencies, we also demonstrate that the growth of  the number of GMRES iterations is lower than the frequency growth.}

Instead  of using an iterative scheme for the interface system arising in our algorithm,  
one may also  consider the construction and storage of the matrix and 
a direct solver for the  system.
The advantage of the latter is that the interface problem matrix can be reused  for numerous incident input waves
that occur in many practical applications, for example,
to compute the monostatic cross sections, and also for developing appropriate  reduced order model (ROM)~\cite{tmatrom} versions of our algorithm. The  matrix arising {\revA{ in our}} interface system  is relatively small  because of the spectral accuracy of the BEM algorithm, and because  the system involves only unknowns on
the artificial interface boundaries. 
Hence  post-processing of the computed fields, such as for the evaluation  of the far-field, can be done quickly and efficiently.
The far-field output also plays a crucial role in developing  stable ROMs for wave propagation models~\cite{ghh:2011,tmatrom}. 

The paper is organized as follows. In Section~\ref{sec:decomposition} we present the  decomposition framework and  prove that, under very weak assumptions, the decomposition  is well-posed and is equivalent to the full heterogeneous and unbounded medium wave propagation model. In 
Section~\ref{sec:fem-bem} we present a numerical discretization for the two dimensional case, combining  high-order finite elements  with  spectrally accurate convergent boundary elements~\cite{Kress:2014} and describe the algebraic and  implementation details. 
In Section~\ref{sec:num-exp}  we demonstrate  the efficiency of the FEM-BEM algorithm for simulating wave propagation in  two distinct classes of (smooth and non-smooth) heterogenous media.

\section{Decomposition framework and well-posedness analysis}\label{sec:decomposition}
Let $\Omega_0\subset \mathbb{R}^m$, $m=2,3$, be a bounded  domain.  
The ratio of the speed of wave propagation inside  the heterogeneous (and not necessarily connected) region $\Omega_0$ and on its free-space exterior $\Omega_0^{\rm c} := \mathbb{R}^m \setminus \overline{\Omega}_0$  is described 
through a \revB{refractive index  function $n$} that we assume in this article to be piecewise smooth with  $1-n$ having compact support in $\overline{\Omega}_0$ (i.e, $n|_{\Omega_0^{\rm c}}\equiv 1$).}

The main focus of this article is to study the wave propagation in $\mathbb{R}^m$, induced by the impinging of an
incident wave $u^{\rm inc}$, say, a plane wave with wavenumber $k >0$.
More precisely, the continuous wave propagation model is to find 
the total field $u (:= u^s + u^{\rm inc }) \in H^1_{\rm loc}(\mathbb{R}^m)$  that satisfies the Helmholtz equation and
the Sommerfeld RC: 
\begin{equation}
 \label{eq:theproblem}
 \left|
 \begin{array}{rcl}
 \Delta u + k^2 n^2\:u &=&0,\quad \text{in }\mathbb{R}^m,\\
 \partial_r u^s-\mathrm{i}k{u}^s&=&o(|r|^{\frac{m+1}2}),\quad \text{as }|r|\to\infty.
 \end{array}
 \right.
\end{equation}
It is well known  that~\eqref{eq:theproblem} is uniquely solvable~\cite{Kress:2014}.
(Later in this section, we introduce the classical Sobolev spaces $H^s$, for $s \geq 0$, with appropriate norms.) 

\subsection{A decomposition framework} 

The heterogeneous-homogeneous model problem \eqref{eq:theproblem} is decomposed  by introducing  two 
artificial curves/surfaces $\Gamma$ and $\Sigma$ with interior $\Omega_1$ and $\Omega_2$ respectively satisfying $\overline{\Omega}_0\subset \Omega_1\subset \overline{\Omega}_1\subset\Omega_2$.  We assume from now on that $\Gamma$ is smooth and $\Sigma$ is a polygonal/polyhedral boundary. A sketch of the different domains is displayed in Figure \ref{fig:01}. Henceforth,
$\Omega_i^{\rm c}:=\mathbb{R}^m\setminus\overline{\Omega}_i, ~i = 0,1,2$. 

We introduce the following decomposed heterogeneous and homogeneous media auxiliary models: 
  \begin{itemize}
\item For  a given function $f_\Sigma^{\rm inp} \in H^{1/2}(\Sigma)$, we  seek  a propagating wave field $w$ so that $w$ and its trace  $\gamma_\Sigma w$ on the boundary $\Sigma$  satisfy 
\begin{equation}
 \label{eq:FEM:0}
 \left|
 \begin{array}{rcl}
 \Delta w + k^2 n^2\:w &=&0,\quad \text{in }\Omega_2,\\
 \gamma_\Sigma w  &=&f_\Sigma^{\rm inp}.
 \end{array}
 \right.
\end{equation}
Throughout the article, we assume  that  this interior problem is uniquely solvable. We introduce the following operator notation for the heterogeneous auxiliary   model:  For any Lipschitz $m$- or $(m-1)$-dimensional (domain or manifold)  $D\subset \Omega_2$, we define the solution operator
$\KSigmaD$ associated with the auxiliary model~\eqref{eq:FEM:0} as 
\begin{equation}
 \label{eq:FEM-oper:0}
\KSigmaD f_\Sigma^{\rm inp} :=w|_D. 
\end{equation}
 Two  cases will be of particular interest for us: $\KSigmaOmegaTwo \revB{f_\Sigma^{\rm inp}}$, which is nothing but $w$ satisfying~\eqref{eq:FEM:0}, and  
 $\KSigmaGamma f_\Sigma^{\rm inp}=\gamma_\Gamma w$, the trace of the solution ${w}$ of~\eqref{eq:FEM:0}  on $\Gamma{\subset\Omega_2}$. 
 
\item In the exterior unbounded  homogeneous medium $\Omega_1^{\rm c}:=\mathbb{R}^m\setminus\Omega_1$, for a given function  
$f_\Gamma^{\rm inp}{\in H^{1/2}(\Gamma)}$ we
seek a scattered field $\widetilde{\omega}$ satisfying 
\begin{equation}
 \label{eq:BEM:0}
 \left|
 \begin{array}{rcl}
 \Delta \widetilde{\omega} + k^2 \widetilde{\omega} &=&0,\quad \text{in }\Omega_1^{\rm c},\\
 \gamma_\Gamma \widetilde{\omega} &=&f_\Gamma^{\rm inp},\\
 \partial_r\widetilde{\omega}- {\rm i}k\widetilde{\omega} &=& o(|r|^{(m-1)/2}).
 \end{array}
 \right.
\end{equation} 
Unlike problem \eqref{eq:FEM:0}, \eqref{eq:BEM:0}  is always  uniquely solvable~\cite{Kress:2014}. We define the associated solution operator $\KGammaD$ as 
\begin{equation}
 \label{eq:BEM-oper:0}
\KGammaD f_\Gamma^{\rm inp} :=\widetilde{\omega}|_D, 
\end{equation}
with special attention to $\KGammaOmegaOneC f_\Gamma^{\rm inp}$ and $\KGammaSigma f_\Gamma^{\rm inp}$, namely the scattered
field $\widetilde{\omega}$ satisfying~\eqref{eq:BEM:0} and its trace $\gamma_\Sigma \widetilde{\omega}$. 
   \end{itemize}


The decomposition framework that we propose for the continuous problem  is the following:
\begin{subequations}
\label{eq:BEMFEM}
\begin{enumerate}
\item  Solve the interface boundary integral system  to find $(f_\Sigma,f_\Gamma)$, using  data 
$(\gamma_{\Sigma} u^{\rm inc}, \gamma_{\Gamma} u^{\rm inc})$ : 
\begin{equation}
 \label{eq:BEMFEM:0}
 \left|
 \begin{array}{ccccrcl}
 \multicolumn{4}{l}{ (f_\Sigma,f_\Gamma)\in H^{1/2}(\Sigma)\times {H^{1/2}(\Gamma)} }\\[1.1ex]
 f_\Sigma&-&\KGammaSigma f_\Gamma&=& \gamma_{\Sigma} u^{\rm inc}\\
 - \KSigmaGamma f_\Sigma&+&f_\Gamma &=& -\gamma_{\Gamma} u^{\rm inc}
 \end{array}
 \right.
\end{equation} 

\item Construct the total field for the model problem~\eqref{eq:theproblem} using the 
solution $(f_\Sigma,f_\Gamma)$ of~\eqref{eq:BEMFEM:0}, by solving the auxiliary models 
\eqref{eq:FEM:0} and \eqref{eq:BEM:0}:
\begin{equation}
 \label{eq:BEMFEM:01}
  u:=\begin{cases}
     \KSigmaOmegaTwo f_\Sigma,\quad&\text{in $\Omega_2$},\\
     \KGammaOmegaOneC f_\Gamma+u^{\rm inc},\quad&\text{in $\Omega_1^{\rm c}$}.
    \end{cases}~
 \end{equation}
\end{enumerate}
\end{subequations}

We claim that, provided  \eqref{eq:BEMFEM:0} is solvable, the decomposed framework-based 
field $u$ defined in~\eqref{eq:BEMFEM:01} is the solution of \eqref{eq:theproblem}. Notice that
we are implicitly assuming in  \eqref{eq:BEMFEM:01} that 
\begin{equation}\label{eq:2.6}
\KSigmaOmegaOneTwo f_\Sigma = u^{\rm inc}|_{\Omega_{12}}+\KGammaOmegaOneTwo  f_\Gamma,
 \end{equation}
where we recall the notation $\Omega_{12} =\Omega_1^{\rm c}\cap \Omega_2$.
Indeed, in view of \eqref{eq:BEMFEM:0}, both functions in \eqref{eq:2.6} agree on $\Sigma\cup \Gamma$  (the boundary of $\Omega_{12}$). 
Assuming, as we will do from now on, that the only solution to the homogeneous system
\begin{equation}
 \label{eq:BEMFEM:02}
 \left| 
 \begin{array}{rcl}
 \Delta v + k^2 v &=&0,\quad \text{in }\Omega_{12},\\
 \gamma_\Gamma v &=&0,\quad \gamma_\Sigma v\,=\,0
 \end{array}
 \right.
\end{equation} 
is the trivial one and noticing that $n|_{\Omega_{12}}\equiv 1$ which implies that $\KSigmaOmegaOneTwo  f_\Sigma $ and $\KGammaOmegaOneTwo f_\Gamma$  are solutions of the Helmholtz equation in $\Omega_{12}$, we can conclude  that \eqref{eq:2.6} holds. Since $u$ defined in \eqref{eq:BEMFEM:01} belongs to $H^1_{\rm loc}(\mathbb{R}^m)$, it is simple to check that this function is the solution of \eqref{eq:theproblem}.

We remark that the hypothesis we have taken on the artificial boundaries/domains, i.e. the well-posedness of problems \eqref{eq:FEM:0} and \eqref{eq:BEMFEM:02}, are not very restrictive in practice:  $\Sigma$ or $\Gamma$ can be modified if needed. Alternatively, one can consider different boundary conditions on $\Gamma$ and $\Sigma$ (such as Robin conditions), redefining $\KSigmaD$ and $\KGammaD$ accordingly,  which will lead to  a variant of the framework that we analyze in this article. In a future work we shall explore other boundary conditions on the interfaces and analysis of the resulting variant models.
\subsection{Well-posedness of the decomposed continuous problem}

The aim of this subsection is to prove that the system of equations \eqref{eq:BEMFEM:0}, under the above stated hypothesis, has a  unique solution. Consequently,  we can conclude  that  the decomposition for the exact solution presented in \eqref{eq:BEMFEM:01} exists and  is unique.   To this end, we first derive  some regularity results related to the  operators  $\KSigmaD$ and $\KGammaD$ in Sobolev spaces. 
\revB{For  the topic of Sobolev spaces,  we refer the reader to~\cite{AdFo:2003,McLean:2000}.}


\subsubsection{Functional spaces} 
Let $D\subset \mathbb{R}^m$ be a Lipschitz domain. For any non-negative integer $s$, we denote
\[
 \|f\|_{H^s(D)}^2:= \sum_{|\bm{\alpha}|\le s} \int_D |\partial_{\bm{\alpha}} f|^2
\]
the Sobolev norm, where the summation uses the  standard multi-index notation in $\mathbb{R}^m$. 
For $s= s_0+\beta$ with $s_0$ a non-negative integer and $\beta\in (0,1)$, we set 
\[
 \|f\|_{H^s(D)}^2:=  \|f\|_{H^{s_0}(D)}^2 + \sum_{|\bm{\alpha}|\le  s_0} \int_D\int_D 
 \frac{|\partial_{\bm{\alpha}} f({\bf x})-\partial_{\bm{\alpha}} f({\bf y})|^2 }{|{\bf x}-{\bf y}|^{m+2\beta}}{{\rm d}{\bf x}\,{\rm d}{\bf y}}. 
\]
The Sobolev space $H^s(\Omega)$ ($s\ge 0$) can be defined as,
\[
 H^s(D):=\{f\in L^2(D)\: : \: \|f\|_{H^s(D)}<\infty \},
\]
endowed with the above natural norm. 

If $\partial D$ denotes the boundary of $D$, we can introduce $H^s(\partial D)$ with a similar construction using local charts: Let $\{\partial D^j,\mu^j,{\bf x}^j\}_{j=1}^{J}$ be an atlas of $\partial D$, that is, $\{\partial D\}_j$  is an open covering of $\partial D$,    $\{\mu^j\}$ a subordinated Lipschitz partition of unity on $\partial D$, and ${\bf x}^j:\mathbb{R}^{m-1}\to \partial D $ being  Lipschitz and injective with $\partial D^j\subset \mathop{\rm Im} {\bf x}^j$, then we define 
\[
 \|\varphi\|_{H^s(\partial D)}^2:=\sum_{j=1}^J \|(\mu^j \varphi)\circ {\bf x}^j \|^2_{H^s(\mathbb{R}^{m-1})}.
\]
We note that $(\mu^j \varphi)\circ{\bf x}^j$ can be extended by zero outside of the image of ${\bf x}^j$. We then set
\[
 H^s(\partial D):=\{\varphi\in L^2(\partial D) \  : \ \|\varphi\|_{H^s(\partial D)}<\infty\}.
\]
The space $H^s(\partial D)$ is well defined for $s\in[0,1]$: Any  choice of $\{\partial D^j,\mu^j,{\bf x}^j\}$ gives rise to an equivalent norm (and inner product). 
If $\partial D$ is a ${\cal C}^m$-boundary, 
such as   $\Gamma$ in Figure~\ref{fig:01},  this construction can be set up for $s\in[0,m]$ by taking $\{{\bf x}^j,{\omega}^j\}$ to be in ${\cal C}^m$ as well. In particular, if $\partial D$ is smooth we can define $H^s(\partial D)$ for any $s\ge 0$.  Further, the space $H^{-s}(\partial D)$ can be defined as the realization of the dual space of $H^{s}(\partial D)$ when the integral product is taken as  a representation of the duality pairing.

It is a classical result that the trace operator $\gamma_{\partial D} u:= u|_{\partial D}$ \revB{defines} a continuous onto  mapping from $H^{s+1/2}(D)$  into $H^s(\partial D)$ for any $s\in (0,1)$. Actually, if $\partial D$ is smooth then $s\in(0,\infty)$. In these cases, we can alternatively define
\[
 H^s(\partial D): =\{\gamma_{\partial D} u\ : \ u\in H^{s+1/2}(D)\}
\]
endowed with the image norm:
\begin{equation}
\label{eq:ImNorm}
\|\varphi\|_{H^s(\partial D)}:= \inf_{{0\ne u\in H^{s+1/2}(D)\atop \gamma_\Sigma u = \varphi}} \|u\|_{H^{s+1/2}(D)}.
\end{equation}
We will use this definition to extend  $H^{s}(\partial D)$ for $s>1$ in the  Lipschitz  case. Notice that with this definition, the trace operator from $H^{s+1/2}(D)$  into $H^s(\partial D)$ is continuous  for any $s>0$. 
%
%
%
%

\subsubsection{Boundary potentials and integral operators}

Let $\Phi_k$ be  the fundamental solution for the two- or three-dimensional constant coefficient  Helmholtz 
operator  ($\Delta +k^2I$) equation, defined for $\mathbf{x}, \mathbf{y}  \in \mathbb{R}^m$ with $r := |\mathbf{x}-\mathbf{y}|$ as
\begin{equation}
  \label{eq:fund}
 \Phi_k(\mathbf{x}, \mathbf{y}) :=\begin{cases}
 \displaystyle
  \frac{\mathrm{i}}{4} H^{(1)}_0(kr),  &  \mathbf{x}, \mathbf{y} \in \mathbb{R}^2, \\ 
 \displaystyle \frac{1}{4 \pi r} \exp(\mathrm{i}kr), &   \mathbf{x}, \mathbf{y} \in \mathbb{R}^3,
  \end{cases}
\end{equation}
where $H^{(1)}_n$ denotes the first kind Hankel
function of order  $n$. For a smooth curve/surface $\Gamma$, with outward unit normal ${\boldsymbol \nu}$ and
normal derivative at ${\bf y} \in \Gamma$ denoted by $\partial_{{\boldsymbol \nu}({\bf y})}$, let 
\[
({\rm SL}_k\varphi)({\bf x}) :=\int_{\Gamma} \Phi_k({\bf x}-{\bf y})\varphi({\bf y}) \,{\rm d}\sigma_{\bf y},\quad
 ({\rm DL}_k g)({\bf x}) :=\int_{\Gamma} \partial_{{\boldsymbol \nu}({\bf y})}\Phi_k({\bf x}-{\bf y})g({\bf y}) \,{\rm d}\sigma_{\bf y}
, \qquad {\bf x}\in \mathbb{R}^m\setminus\Gamma,\]
denote the single- and double-layer potentials,  with density functions $\varphi$ and $g$, respectively. 

The single- and double-layer  boundary integral operators are then given, 
via the well-known jump relations~\cite{KressColton} for the boundary layer potentials, by 
\begin{eqnarray}\label{eq:Vk}
 \mathrm{V}_{k}\varphi &:=&(\gamma_{\Gamma}{\rm SL}_k)\varphi=\int_{\Gamma} \Phi_k(\,\cdot\,-{\bf y})\varphi({\bf y}) \,{\rm d}\sigma_{\bf y},
 \\
 \mathrm{K}_{k}g &:=&\pm\tfrac12 g + (\gamma^{\mp}_{\Gamma}{\rm DL}_k)g
 =\int_{\Gamma} \partial_{{\boldsymbol \nu}({\bf y})}\Phi_k(\,\cdot\,-{\bf y})g({\bf y})  \,{\rm d}\sigma_{\bf y},
 \label{eq:Dk}
\end{eqnarray}
where  $\gamma^-_{\Gamma}$ and  $\gamma^+_{\Gamma}$ are trace operators on $\Gamma$, respectively, from the interior $\Omega_1$ and 
exterior $\Omega_1^{\rm c}$. Given a real non-vanishing   smooth function $\sigma:\Gamma\to \mathbb{R}$, and 
$\mathrm{V}_{k,\sigma}\phi :=  \mathrm{V}_k(\sigma \phi)$  for any $\phi \in   H^s(\Gamma)$, we consider the 
combined field acoustic  layer operator 
\begin{equation}
\label{eq:2.6a}
 \tfrac12 {\rm I}+\mathrm{K}_k -\mathrm{i}k\mathrm{V}_{k,\sigma}:H^s(\Gamma)\to H^s(\Gamma).
\end{equation}
\revB{Throughout this article,  ${\rm I}$ denotes the identity operator.}
The standard combined field operator used in the literature~\cite{KressColton} is based on the choice $\sigma\equiv 1$. 
In this article,  we do not restrict ourselves to the usual choice for reasons which will be fully explained later. Since
$\Gamma$ is smooth and that 
 $\mathrm{K}_k, \mathrm{V}_{k,\sigma} :H^s(\Gamma)\to H^{s+1}(\Gamma)$ are continuous, the operator in \eqref{eq:2.6a} is invertible as a consequence of the Fredholm alternative and the injectivity of \eqref{eq:2.6a}, which follows from a very simple modification of  the classical argument in \cite[Th 3.33]{KressColton}).

Thus the inverse of the combined field integral operator
\begin{equation}
\label{eq:3.4}
{\cal L} _{k,\sigma}:=\Big(\tfrac12 {\rm I}+\mathrm{K}_k -\mathrm{i}k\,\mathrm{V}_{k,\sigma}\Big)^{-1}{:H^s(\Gamma)\to H^s(\Gamma)}
\end{equation}
is well defined. 
Further, using \eqref{eq:Vk}-\eqref{eq:Dk} and with $\mathrm{SL}_{k,\sigma}\phi := \mathrm{SL}_k(\sigma\phi)$ for any $\phi \in   H^s(\Gamma)$, we can write
the solution operator occurring in the construction~\eqref{eq:BEMFEM:01} as
\begin{equation}
\label{eq:KGammaOmega}
\KGammaOmegaOneC=(\mathrm{DL}_k-\mathrm{i}{k}\,\mathrm{SL}_{k,\sigma}) {\cal L}_{k, \sigma}.
\end{equation}
The above solution operator, a variant of the Brakhage-Werner formulation (BWF)~\cite{MR0190518, KressColton}, will be used  in this article 
for both theoretical and computational purposes. The choice  $\sigma\equiv 1$ reduces to the standard BWF~\cite{MR0190518, KressColton}. 

\subsubsection{Well-posedness analysis  of the interface model}
In this subsection, we first develop two key results before proving well-posedness of the boundary integral system~\eqref{eq:BEMFEM:0}.
\begin{lemma}\label{lemma:01}
The operator 
\begin{equation}\label{eq:2.8}
\KGammaOmegaOneC: H^{s}(\Gamma)\to H_{\rm loc}^{s+1/2}(\Omega_1^{\rm c})
\end{equation}
is continuous for any $s\in [0,\infty)$. Further, for any bounded Lipschitz domain/manifold  $D\subset \Omega_1^{\rm c}$ with $\overline{D}\cap \Gamma =\emptyset$, the solution operator $\KGammaD$ in ~\eqref{eq:BEM-oper:0},
for the homogeneous media problem~\eqref{eq:BEM:0},
satisfies the following mapping property for any $s,r\ge 0$ 
\begin{equation}\label{eq:2.85}
\KGammaD: H^{s}(\Gamma)\to H^{r}(D).
\end{equation}
In particular, 
\begin{equation}\label{eq:2.85b}
 \KGammaSigma: H^{s}(\Gamma)\to H^{r}(\Sigma)
\end{equation}
is continuous and compact, for $s,r\in\mathbb{R}$. 
\end{lemma}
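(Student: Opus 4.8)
The plan is to build everything on the explicit formula \eqref{eq:KGammaOmega}, $\KGammaOmegaOneC=(\mathrm{DL}_k-\mathrm{i}k\,\mathrm{SL}_{k,\sigma})\,{\cal L}_{k,\sigma}$, together with the mapping properties of the layer potentials on a smooth boundary $\Gamma$. First I would record the classical continuity statements for the boundary layer potentials off and on a ${\cal C}^m$ (hence smooth) curve/surface: $\mathrm{SL}_k, \mathrm{DL}_k$ map $H^s(\Gamma)$ continuously into $H^{s+1/2}_{\rm loc}(\Omega_1^{\rm c})$ (indeed into $H^{s+3/2}_{\rm loc}$ for $\mathrm{SL}_k$, one less derivative for $\mathrm{DL}_k$, but $s+1/2$ is the common denominator and suffices here), and $\mathrm{V}_{k},\mathrm{K}_{k}$ map $H^s(\Gamma)\to H^{s+1}(\Gamma)$, which the excerpt already asserts just before \eqref{eq:3.4}. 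Since ${\cal L}_{k,\sigma}$ was established to be a bounded bijection of $H^s(\Gamma)$ onto itself for every $s\ge 0$, composing it with the potential operators yields \eqref{eq:2.8} immediately: $\KGammaOmegaOneC : H^s(\Gamma)\xrightarrow{{\cal L}_{k,\sigma}} H^s(\Gamma)\xrightarrow{\mathrm{DL}_k-\mathrm{i}k\mathrm{SL}_{k,\sigma}} H^{s+1/2}_{\rm loc}(\Omega_1^{\rm c})$, all arrows continuous, so the composition is continuous for all $s\in[0,\infty)$.

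For the interior elliptic-regularity half, \eqref{eq:2.85}, the key point is that $\widetilde\omega=\KGammaOmegaOneC f_\Gamma^{\rm inp}$ solves the homogeneous Helmholtz equation $\Delta\widetilde\omega+k^2\widetilde\omega=0$ in the open set $\Omega_1^{\rm c}$, hence is a (complex-)analytic function there; in particular it is $C^\infty$, and all its derivatives are locally bounded, on any $D$ with $\overline D\subset\Omega_1^{\rm c}$, i.e. $\overline D\cap\Gamma=\emptyset$. One then argues quantitatively: fix an intermediate open set $D'$ with $\overline D\subset D'$, $\overline{D'}\subset\Omega_1^{\rm c}$, use \eqref{eq:2.8} to get $\widetilde\omega\in H^{s+1/2}(D')\subset L^2(D')$ with norm controlled by $\|f_\Gamma^{\rm inp}\|_{H^s(\Gamma)}$, and then invoke interior elliptic regularity for the Helmholtz operator (Caccioppoli-type / Weyl-type estimates, or repeated application of interior $H^{\ell+2}$-estimates on nested subdomains) to bootstrap: $\|\widetilde\omega\|_{H^r(D)}\le C(D,D',k,r)\,\|\widetilde\omega\|_{L^2(D')}\le C'\,\|f_\Gamma^{\rm inp}\|_{H^s(\Gamma)}$ for every $r\ge 0$. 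This gives the continuous mapping \eqref{eq:2.85} for all $s,r\ge 0$.

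Finally, \eqref{eq:2.85b} follows by applying \eqref{eq:2.85} with $D=\Sigma$ — legitimate because $\Sigma\subset\Omega_1^{\rm c}$ (since $\overline\Omega_1\subset\Omega_2$ and $\Sigma=\partial\Omega_2$, so $\Sigma\cap\overline\Omega_1=\emptyset$) and $\Sigma$ is a compact Lipschitz manifold at positive distance from $\Gamma$. Thus $\KGammaSigma:H^s(\Gamma)\to H^r(\Sigma)$ is continuous for all $s,r\ge 0$. Compactness is then obtained for free by the gain of regularity: pick any $r'>r$, factor $\KGammaSigma:H^s(\Gamma)\to H^{r'}(\Sigma)\hookrightarrow H^r(\Sigma)$, where the first map is bounded by \eqref{eq:2.85} and the inclusion $H^{r'}(\Sigma)\hookrightarrow H^r(\Sigma)$ is compact by the Rellich--Kondrachov theorem on the compact manifold $\Sigma$; a bounded operator followed by a compact one is compact. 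For the range of negative indices $s,r\in\mathbb{R}$ asserted in the statement, one extends by duality: the transpose of $\KGammaSigma$ is a smoothing operator between the corresponding dual spaces (its Schwartz kernel $\partial_{\nu(\mathbf y)}\Phi_k(\mathbf x-\mathbf y)$ etc., composed with ${\cal L}_{k,\sigma}$, is smooth since $\mathbf x\in\Sigma$ and $\mathbf y\in\Gamma$ are separated), so the same continuity-plus-compact-embedding argument applies, and $\KGammaSigma$ inherits compactness on $H^s(\Gamma)\to H^r(\Sigma)$ for all real $s,r$.

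\medskip

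\emph{Anticipated main obstacle.} The genuinely delicate step is making the interior bootstrap \eqref{eq:2.85} quantitative and uniform in a way that chains cleanly with \eqref{eq:2.8}: one must be careful that the constant in the elliptic estimate depends only on $k$, $r$, and the fixed geometric separation $\mathrm{dist}(\overline D,\Gamma)$, not on $f_\Gamma^{\rm inp}$, and that the nested-domain construction ($\overline D\subset D'\subset\overline{D'}\subset\Omega_1^{\rm c}$) is available for an arbitrary Lipschitz $D$. Everything else — the layer-potential mapping properties, invertibility of ${\cal L}_{k,\sigma}$, Rellich compactness, the duality extension — is standard and cited or already in the excerpt.
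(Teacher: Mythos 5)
Your proposal is correct, and for the first assertion \eqref{eq:2.8} it coincides exactly with the paper's argument: factor $\KGammaOmegaOneC$ through \eqref{eq:KGammaOmega}, use the boundedness of ${\cal L}_{k,\sigma}$ on $H^s(\Gamma)$ from \eqref{eq:3.4}, and the mapping properties of $\mathrm{SL}_k$, $\mathrm{DL}_k$ into $H^{s+1/2}_{\rm loc}(\Omega_1^{\rm c})$ (the paper cites McLean, Th.~6.12).

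Where you diverge is in the second half. The paper disposes of \eqref{eq:2.85} and \eqref{eq:2.85b} in one line: since $\overline D\cap\Gamma=\emptyset$, the kernels of $\mathrm{SL}_k$ and $\mathrm{DL}_k$ are smooth on $\overline D\times\Gamma$, so the potentials (composed with the bounded ${\cal L}_{k,\sigma}$) are infinitely smoothing operators; this works verbatim whether $D$ is a domain or a manifold such as $\Sigma$, and handles negative Sobolev indices at no extra cost since a smooth kernel pairs with distributional densities. You instead run an interior elliptic bootstrap for the homogeneous Helmholtz equation on nested subdomains, then obtain $\Sigma$ by restriction, compactness by Rellich, and negative indices by duality. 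Your route is sound, but note two small frictions it creates that the kernel argument avoids: (i) your bootstrap is phrased for open sets, so for $D=\Sigma$ you implicitly need the trace step $H^{r+1/2}(D')\to H^{r}(\Sigma)$, which is fine here only because the paper defines $H^r(\Sigma)$ for $r>1$ on the Lipschitz boundary $\Sigma$ precisely via the image norm \eqref{eq:ImNorm}; (ii) the duality extension to $s<0$ additionally requires ${\cal L}_{k,\sigma}$ to act boundedly on $H^s(\Gamma)$ for negative $s$, which holds since $\Gamma$ is smooth but is not something you can get from \eqref{eq:2.8} alone. On the plus side, your treatment of compactness and of the full range $s,r\in\mathbb{R}$ is more explicit than the paper's, which asserts these without comment.
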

\begin{proof}
 The first desired property follows from the identities \eqref{eq:KGammaOmega}, \eqref{eq:3.4} and the well known mapping properties 
\begin{equation}\label{eq:2.75}
 \mathrm{DL}_k:H^s(\Gamma)\to H_{\rm loc}^{s+1/2}(\Omega_1^{\rm c}),\quad
 \mathrm{SL}_k:H^{s-1}(\Gamma)\to H_{\rm loc}^{s+1/2}(\Omega_1^{\rm c}),
\end{equation}
see for instance \cite[Th. 6.12]{McLean:2000}.
If   $\overline{D}\cap \Gamma =\emptyset$, 
 the kernels in the  boundary potentials in $\mathrm{DL}_k$ and $\mathrm{SL}_k $ are smooth functions in $\overline{D}\times \Gamma$ and hence the properties \eqref{eq:2.85} and \eqref{eq:2.85b}  hold.
\end{proof}

Next we consider the heterogeneous media model solution operator  $\KSigmaOmegaTwo$, as defined 
in~\eqref{eq:FEM:0}-~\eqref{eq:FEM-oper:0}. We recall the well known classical estimate~\cite{Ihlenburg:1998}
\[
 \|\KSigmaOmegaTwo f_\Sigma^{\rm inp}\|_{H^1(\Omega_2)}\le C\|f_\Sigma^{\rm inp}\|_{H^{1/2}(\Sigma)},
\]
with $C>0$ being a constant independent of $\revB{f_\Sigma^{\rm inp}}$.  
Below, we generalize this to obtain a higher regularity, using  boundary layer potentials and boundary integral operators, {defined in this case on barely Lipschitz curves/surfaces}  to improve the estimate for domains $D$ with $\overline{D}\subset \Omega_2\setminus\overline{\Omega}_1$. 

\begin{lemma}\label{lemma:02}
 There exists \revB{a constant} $C=C(k,n,\Omega_2)$ so that for any $s\in[0,1]$ and $f_\Sigma^{\rm inp}\in{H^ {s}(\Sigma)}$, 
\begin{equation}\label{eq:2.10}
 \|\KSigmaOmegaTwo  f_\Sigma^{\rm inp}\|_{H^ {s+1/2}(\Omega_2)}\le C \|f_\Sigma^{\rm inp}\|_{H^ {s}(\Sigma)}.
\end{equation}
Furthermore, if $D\subset \overline{D}\subset \Omega_2\setminus\overline{\Omega}_1$  the following solution
operator  mapping property holds for any $r\in \mathbb{R}$
\begin{equation}\label{eq:2.95}
 \KSigmaD : H^{0}(\Sigma)\to H^{r}(D).
\end{equation}
Consequently,  
\begin{equation}\label{eq:2.95b}
 \KSigmaGamma : H^{0}(\Sigma)\to H^{r}(\Gamma)
\end{equation}
is continuous and compact,  for any $r\in\mathbb{R}$. 
\end{lemma}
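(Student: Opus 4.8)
The plan is to establish the Sobolev bound \eqref{eq:2.10} at the two endpoints $s=0$ and $s=1$, interpolate for $s\in(0,1)$, and then read off the interior mapping properties \eqref{eq:2.95}--\eqref{eq:2.95b} from the fact that $w:=\KSigmaOmegaTwo f_\Sigma^{\rm inp}$ solves the \emph{constant}-coefficient equation $\Delta w+k^2 w=0$ on the open set $\Omega_2\setminus\overline{\Omega}_0$, which, because the inclusions $\overline{\Omega}_0\subset\Omega_1\subset\overline{\Omega}_1\subset\Omega_2$ are strict, contains a full two-sided neighbourhood of $\Gamma$ as well as every $D$ with $\overline D\subset\Omega_2\setminus\overline{\Omega}_1$. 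For $s=1$ I would take a bounded right inverse $\mathcal{E}\colon H^{1}(\Sigma)\to H^{3/2}(\Omega_2)$ of $\gamma_\Sigma$, write $w=w_0+\mathcal{E}f_\Sigma^{\rm inp}$, and note that $w_0$ solves $(\Delta+k^2 n^2)w_0=-(\Delta+k^2 n^2)\mathcal{E}f_\Sigma^{\rm inp}=:g$ with $\gamma_\Sigma w_0=0$ and $\|g\|_{H^{-1/2}(\Omega_2)}\le C\|f_\Sigma^{\rm inp}\|_{H^{1}(\Sigma)}$ (using $n\in L^\infty$, so $n^2\mathcal{E}f_\Sigma^{\rm inp}\in L^2(\Omega_2)\subset H^{-1/2}(\Omega_2)$). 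The standing hypothesis that \eqref{eq:FEM:0} is well posed, together with the Fredholm alternative, makes the homogeneous-Dirichlet solution operator of $\Delta+k^2 n^2$ on $\Omega_2$ an isomorphism; combining this with the elliptic regularity shift for that operator on the polygonal/polyhedral $\Omega_2$ — which near $\Sigma$ is that of the Laplacian, since $n\equiv1$ there, and is valid up to index $\tfrac32$ because $\Sigma$ has no opening $\ge2\pi$ — yields $\|w_0\|_{H^{3/2}(\Omega_2)}\le C\|g\|_{H^{-1/2}(\Omega_2)}$, hence \eqref{eq:2.10} at $s=1$.

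For $s=0$ I would argue by transposition. Given $g\in\mathcal{D}(\Omega_2)$, let $\phi$ solve the (self-)adjoint problem $(\Delta+k^2 n^2)\phi=g$, $\gamma_\Sigma\phi=0$; by the same isomorphism and regularity theory on the polygon, $\phi\in H^{3/2}(\Omega_2)$ with conormal trace in $L^{2}(\Sigma)$ and $\|\partial_\nu\phi\|_{L^{2}(\Sigma)}\le C\|g\|_{H^{-1/2}(\Omega_2)}\le C\|g\|_{\widetilde{H}^{-1/2}(\Omega_2)}$. Green's identity and the equations for $w$ (case $f_\Sigma^{\rm inp}\in H^{1/2}(\Sigma)$) and $\phi$ give $\int_{\Omega_2}w\,g=\int_{\Sigma}f_\Sigma^{\rm inp}\,\partial_\nu\phi$, whence $|\int_{\Omega_2}w\,g|\le\|f_\Sigma^{\rm inp}\|_{L^{2}(\Sigma)}\|\partial_\nu\phi\|_{L^{2}(\Sigma)}\le C\|f_\Sigma^{\rm inp}\|_{L^{2}(\Sigma)}\|g\|_{\widetilde{H}^{-1/2}(\Omega_2)}$; a supremum over $g$ (dense in $\widetilde{H}^{-1/2}(\Omega_2)=(H^{1/2}(\Omega_2))'$) together with density of $H^{1/2}(\Sigma)$ in $L^2(\Sigma)$ yields $\|w\|_{H^{1/2}(\Omega_2)}\le C\|f_\Sigma^{\rm inp}\|_{L^{2}(\Sigma)}$ and, simultaneously, the continuous extension of $\KSigmaOmegaTwo$ to $L^{2}(\Sigma)$. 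Interpolating between $s=0$ and $s=1$, and checking agreement with the quoted classical bound at $s=\tfrac12$, gives \eqref{eq:2.10} for all $s\in[0,1]$.

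For \eqref{eq:2.95}, fix $D$ with $\overline D\subset\Omega_2\setminus\overline{\Omega}_1$ and pick $\chi\in\mathcal{D}(\Omega_2\setminus\overline{\Omega}_0)$ with $\chi\equiv1$ on a neighbourhood of $\overline D$ (possible, $\overline D$ being a compact subset of that open set); then $\operatorname{supp}\chi$ is compact in $\Omega_2\setminus\overline{\Omega}_0$, hence at positive distance from $\Sigma$ and from $\overline{\Omega}_0$, so $n\equiv1$ on $\operatorname{supp}\chi$ and $\chi w$ extends by zero to a compactly supported element of $H^{1/2}(\mathbb{R}^m)$ with $(\Delta+k^2)(\chi w)=2\nabla\chi\cdot\nabla w+(\Delta\chi)\,w=:h$, where $\operatorname{supp}h\subset\operatorname{supp}\nabla\chi$ is at positive distance from $D$ and $\|h\|_{H^{-1/2}(\mathbb{R}^m)}\le C_D\|w\|_{H^{1/2}(\Omega_2)}$. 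Representing $\chi w$ by Green's formula over a large ball — equivalently as minus the Helmholtz volume potential of $h$, or, after localisation inside the annulus $\Omega_2\setminus\overline{\Omega}_1$, through the single- and double-layer potentials on the barely Lipschitz boundary $\Sigma$ and the smooth boundary $\Gamma$ — gives $w(x)=-\langle h,\Phi_k(x,\cdot)\rangle$ for $x\in D$; since $\Phi_k(x,\cdot)$ is $\mathcal{C}^\infty$ near $\operatorname{supp}h$ uniformly for $x\in\overline D$, differentiation under the pairing shows $w\in\mathcal{C}^\infty(\overline D)$ with $\|w\|_{H^{r}(D)}\le C_{r,D}\|h\|_{H^{-1/2}}\le C_{r,D}'\|f_\Sigma^{\rm inp}\|_{L^{2}(\Sigma)}$ for every $r$, using the case $s=0$; this is \eqref{eq:2.95}. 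Running the same argument with $\chi\equiv1$ on a thin two-sided neighbourhood $\mathcal{N}$ of $\Gamma$ (again a compact subset of $\Omega_2\setminus\overline{\Omega}_0$) makes $w$ smooth on $\mathcal{N}$ with $\mathcal{C}^{j}$ norms controlled by $C\|f_\Sigma^{\rm inp}\|_{L^{2}(\Sigma)}$, so its restriction $\KSigmaGamma f_\Sigma^{\rm inp}=\gamma_\Gamma w$ lies in $H^{\rho}(\Gamma)$ for every $\rho$ with the same bound; in particular $\KSigmaGamma$ maps $H^0(\Sigma)$ boundedly into $H^{r+1}(\Gamma)$, which embeds compactly into $H^{r}(\Gamma)$ by Rellich's theorem on the smooth compact manifold $\Gamma$, giving \eqref{eq:2.95b}.

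The main obstacle is the endpoint estimate: the reliance on a regularity/shift theorem for the Dirichlet problem on the nonsmooth domain $\Omega_2$ — this is exactly where the corners/edges of $\Sigma$ bite, although the target index $s+\tfrac12\le\tfrac32$ stays within the admissible range — together with the attendant functional-analytic bookkeeping (the $L^{2}(\Sigma)$-trace of $H^{3/2}(\Omega_2)$, the distinction between $\widetilde{H}^{-1/2}(\Omega_2)$ and $H^{-1/2}(\Omega_2)$, and interpolation on Lipschitz sets). By contrast, the interior estimates \eqref{eq:2.95}--\eqref{eq:2.95b} are routine once the strict geometry $\overline{\Omega}_0\Subset\Omega_1\Subset\Omega_2$ is exploited to place the cutoff in the region where $n\equiv1$.
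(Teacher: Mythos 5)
Your overall route is genuinely different from the paper's: you aim for the two endpoint estimates $s=0$ (by transposition) and $s=1$ (by lifting plus a shift theorem) and then interpolate, whereas the paper never interpolates -- it writes $v=\KSigmaOmegaTwo f_\Sigma^{\rm inp}$ via the third Green identity as a Newton potential over $\Omega_0$ plus $\mathrm{SL}_{k,\Sigma}\partial_\nu v-\mathrm{DL}_{k,\Sigma}f_\Sigma^{\rm inp}$, reduces everything to the bound $\|\partial_\nu v\|_{H^{s-1}(\Sigma)}\le C\|f_\Sigma^{\rm inp}\|_{H^s(\Sigma)}$, and obtains that bound by splitting $v=v_1+v_2$ with $v_1$ the \emph{harmonic} extension of the Dirichlet data (handled by the Jerison--Kenig/Verchota-type regularity results quoted from McLean) and $v_2$ a zero-trace solution with $L^2$ right-hand side (handled by Grisvard's $L^2\to H^{3/2+\varepsilon}$ shift on the polygon). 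Your treatment of \eqref{eq:2.95}--\eqref{eq:2.95b} by a cutoff supported in $\{n\equiv1\}$ and a volume-potential representation with smooth kernel is correct and essentially equivalent to the paper's ``smooth kernels'' argument; if anything it handles the case $D=\Gamma$ more cleanly than the paper's literal hypothesis $\overline D\subset\Omega_2\setminus\overline\Omega_1$ does.

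However, both of your endpoint arguments have a genuine gap as justified. For $s=0$ you infer $\partial_\nu\phi\in L^2(\Sigma)$ with $\|\partial_\nu\phi\|_{L^2(\Sigma)}\le C\|g\|_{\widetilde H^{-1/2}(\Omega_2)}$ from $\phi\in H^{3/2}(\Omega_2)$; but the trace map $H^{1/2}(\Omega_2)\to H^0(\Sigma)$ is exactly the endpoint at which the trace theorem fails (the paper is careful to get $v_2\in H^{3/2+\varepsilon}$ before taking the normal trace), and membership in $H^{3/2}$ alone does not give an $L^2$ conormal trace, let alone one controlled by a \emph{negative}-order norm of $g$. The estimate you need is true, but only as the dual statement of the $L^2(\Sigma)\to H^{1/2}(\Omega_2)$ bound for the \emph{harmonic} Dirichlet problem on a Lipschitz domain -- i.e., the same nontrivial harmonic-analysis input as the paper's \eqref{eq:2.11} -- and your proposal never invokes it. For $s=1$ you need $\|w_0\|_{H^{3/2}(\Omega_2)}\le C\|g\|_{H^{-1/2}(\Omega_2)}$ for the zero-trace problem on the polygon; on a non-convex $\Sigma$ this does not follow by interpolating the $H^{-1}\to H^1$ and $L^2\to H^{3/2+\varepsilon_0}$ bounds (which only yields $H^{-1/2}\to H^{5/4+\varepsilon_0/2}$, weaker than $H^{3/2}$ when $\varepsilon_0<1/2$), so as stated this step is unsupported. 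Both gaps close if you choose $\mathcal E$ to be the harmonic extension: then the residual right-hand side is $-k^2n^2\mathcal Ef\in L^2(\Omega_2)$, the Grisvard shift gives the strictly better $H^{3/2+\varepsilon}$ regularity needed for the Neumann trace, and the harmonic part is covered by the potential-theoretic estimates -- which is precisely the decomposition the paper uses.
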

\begin{proof} {Throughout  this proof we let  $s\in[0,1]$ and, for notational convenience, we denote $v:=\KSigmaOmegaTwo f_\Sigma^{\rm inp}$}.
 Since, by definition, 
\[
 \Delta v+k^2 v= k^2(1-n^2)v,\quad \gamma_{\Sigma}v=f_\Sigma^{\rm inp}. 
\]
By the third Green identity (see for instance \cite[Th. 6.10]{McLean:2000}) we have the representation  
\begin{equation}\label{eq:2.9}
v =  k^2\int_{\Omega_0} \Phi_k(\cdot-{\bf y}) g^v_n({\bf y})\,{\rm d}{\bf y}+ 
 \mathrm{SL}_{k,\Sigma}\lambda^v_\Sigma-\mathrm{DL}_{k,\Sigma}f_\Sigma^{\rm inp},
\end{equation}
with $\mathop{\rm supp} {g^v_n}\subset\Omega_0$, where {we have used} the notation 
\[
 \lambda^v_\Sigma:= \partial_{\boldsymbol \nu} v,\quad g^v_n:=(1-n^2)v.
\] 
In the expression above $\mathrm{SL}_{k,\Sigma}$ and $\mathrm{DL}_{k,\Sigma}$ denote respectively the single- and double-layer potential from the corresponding densities, defined on $\Sigma$, associated with  the 
constant coefficient Helmholtz operator $\Delta +k^2{\rm I}$. Next we prove  that
 \[
  \|\lambda^v_\Sigma\|_{H^ {s-1}(\Sigma)}=\|\partial_{\boldsymbol \nu} v\|_{H^ {s-1}(\Sigma)}\le C \|f_\Sigma^{\rm inp}\|_{H^ {s}(\Sigma)}. 
  \]
To this end, we start from the decomposition  $v=v_1+v_2$, where the harmonic $v_1$   and the interior wave-field $v_2$ are  solutions of 
\[
 \left|\begin{array}{l}
        \Delta v_1 = 0,\quad \text{in }\Omega_2,\\
        \gamma_\Sigma v_1 =f_\Sigma^{\rm inp},
       \end{array}\right.\quad\text{and} \quad
 \left|\begin{array}{l}
        \Delta v_2+k^ 2 n^2 v_2 = -k^ 2 n^2 v_1,\quad \text{in }\Omega_2,\\
        \gamma_\Sigma v_2 =0.
       \end{array} \right.
\]
Classical  potential theory results, see   \cite[Th 6.12]{McLean:2000} and  the discussion which follows it {(see also  references therein)}, show that there exists $C>0$ so that  
\begin{equation}\label{eq:2.11}
  \|v_1\|_{H^ {s+1/2}(\Omega_2)}\le C \|f_\Sigma^{\rm inp}\|_{H^{s}(\Sigma)},\quad \|\partial_{{\bm \nu}} v_1\|_{H^ {s-1}(\Sigma)}\le C' \|f_\Sigma^{\rm inp}\|_{H^{s}(\Sigma)},
\end{equation}
for any $f_\Sigma^{\rm inp}\in H^s(\Sigma)$. On the other hand, following ~\cite[Ch. 4]{Gri:1985} or \cite{Dauge:1988}
there exists $\varepsilon>0$ and $C_{\varepsilon}>0$ such that  
\begin{equation}\label{eq:2.12}
 \|v_2\|_{H^{3/2+\varepsilon}(\Omega_2)}\le C_\varepsilon\|v_1\|_{H^0(\Omega)}\le C_\varepsilon\|f_\Sigma^{\rm inp}\|_{H^{0}(\Sigma)}.
\end{equation}
By the trace theorem (applied to $\nabla v_2$), 
\[
 \|\partial_{\boldsymbol \nu} v_2\|_{H^ {0}(\Sigma)} \le C\|\nabla v_2\|_{H^{1/2+\varepsilon}(\Omega_2)} \le 
 C'\|v_2\|_{H^{3/2+\varepsilon}(\Omega_2)}\le C''\|f_\Sigma^{\rm inp}\|_{H^{0}(\Sigma)}.
\]
Combining these estimates with \eqref{eq:2.9} we conclude that 
\begin{eqnarray*}
\|v\|_{H^{s+1/2}(\Omega_2)}&\le& C_s \big(\|g_n^v\|_{L^2(\Omega_0)} + \|\lambda^v_\Sigma\|_{H^{s-1}(\Sigma)}+\|f_\Sigma^{\rm inp}\|_{H^{s}(\Sigma) }\big) \\
 &\le& C'_s \big(\|v\|_{L^2(\Omega_0)} +\|f_\Sigma^{\rm inp}\|_{H^{s}(\Sigma)} \big)  \\
 &\le& C''_s \|f_\Sigma^{\rm inp}\|_{H^{s}(\Sigma)}. 
\end{eqnarray*}
Notice also that if $D\subset \overline{D}\subset \Omega_2\setminus\overline{\Omega}_1$, 
because the kernels of the potentials operators and the Newton potential are smooth in the corresponding variables,
we gain from the extra smoothing properties of the underlying operators  in \eqref{eq:2.9}   to derive
\[
 \|v\|_{H^{r}(D)} \le C \big(\|g^v\|_{L^2(\Omega_0)} + \|\lambda^v_\Sigma\|_{H^{-1}(\Sigma)}+\|f_\Sigma^{\rm inp}\|_{H^{0}(\Sigma)} \big)\le 
 C'\|f_\Sigma^{\rm inp}\|_{H^0(\Sigma)}, 
\] 
\revB{where the constants $C$ and $C'$ are independent of $f_\Sigma^{\rm inp}$.}
%

 \end{proof}
%
For deriving the main desired result of this section, it is convenient to  define the following  off-diagonal 
operator matrix
\[
 {\cal K}:=\begin{bmatrix}
                             &\KSigmaGamma\\
            \KGammaSigma &
          \end{bmatrix}.
\]
Then \eqref{eq:BEMFEM:0} can be written in operator form 
\begin{equation}
 \label{eq:BEMFEM:0b}
 \left({\cal I}-{\cal K}\right)\begin{bmatrix}
                          f_\Sigma\\
                          f_\Gamma
                          \end{bmatrix}=\begin{bmatrix*}[r]
                          \gamma_{\Sigma} u^{\rm inc}\\
                          -\gamma_{\Gamma} u^{\rm inc}
                          \end{bmatrix*},
\end{equation}
\revB{where ${\cal I}$ denotes the  $2\times 2$ block identity operator.}
 A simple consequence of Lemmas \ref{lemma:01} and \ref{lemma:02} is that
 \[
 {\cal I}-{\cal K}:H^s(\Sigma)\times H^{\revAB{s}}(\Gamma)\to H^s(\Sigma)\times H^s(\Gamma)
 \]
 is continuous for any $s\ge 0$. Next we prove  that  this operator is indeed an isomorphism: 
%
%

\begin{theorem}\label{th:3.3} For any $s \ge 0$,
\[
 {\cal I}-{\cal K}: H^s(\Sigma)\times H^s(\Gamma)\to H^s(\Sigma)\times H^s(\Gamma),
\]
is an invertible compact perturbation of the identity operator. 
\end{theorem}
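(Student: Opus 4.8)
The plan is to invoke the Fredholm alternative. First I would observe that the off-diagonal blocks of ${\cal K}$ are precisely the operators $\KGammaSigma$ and $\KSigmaGamma$, which by the compactness statements \eqref{eq:2.85b} and \eqref{eq:2.95b} of Lemmas~\ref{lemma:01} and~\ref{lemma:02} are continuous and compact as maps $H^s(\Gamma)\to H^s(\Sigma)$ and $H^s(\Sigma)\to H^s(\Gamma)$ for every $s\ge 0$ (using $H^s(\Sigma)\hookrightarrow H^0(\Sigma)$ for the latter). Since an operator matrix with compact entries is compact, ${\cal K}$ is compact on $H^s(\Sigma)\times H^s(\Gamma)$, so ${\cal I}-{\cal K}$ is a compact perturbation of the identity there and is Fredholm of index zero. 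It then suffices to prove that ${\cal I}-{\cal K}$ is injective; the Fredholm alternative delivers bijectivity and boundedness of the inverse.

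To prove injectivity, I would take $(f_\Sigma,f_\Gamma)\in H^s(\Sigma)\times H^s(\Gamma)$, $s\ge 0$, with $f_\Sigma=\KGammaSigma f_\Gamma$ and $f_\Gamma=\KSigmaGamma f_\Sigma$, and first run a one-step bootstrap on regularity: by the smoothing in \eqref{eq:2.85b}--\eqref{eq:2.95b} this forces $f_\Sigma\in H^r(\Sigma)$ and $f_\Gamma\in H^r(\Gamma)$ for all $r$, in particular in $H^{1/2}$, so the auxiliary problems \eqref{eq:FEM:0} and \eqref{eq:BEM:0} can be used in their classical settings. Setting $w:=\KSigmaOmegaTwo f_\Sigma\in H^1(\Omega_2)$ and $\widetilde\omega:=\KGammaOmegaOneC f_\Gamma\in H^1_{\rm loc}(\Omega_1^{\rm c})$ and unwinding the definitions of the solution operators gives $\gamma_\Sigma w=f_\Sigma$, $\gamma_\Gamma w=\KSigmaGamma f_\Sigma=f_\Gamma$, $\gamma_\Gamma\widetilde\omega=f_\Gamma$ and $\gamma_\Sigma\widetilde\omega=\KGammaSigma f_\Gamma=f_\Sigma$. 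Hence $v:=w-\widetilde\omega$ has vanishing trace on $\Gamma$ and on $\Sigma$, and since $n\equiv1$ on $\Omega_{12}$ both $w$ and $\widetilde\omega$ are constant-coefficient Helmholtz solutions there, so $v$ solves the homogeneous overlap problem \eqref{eq:BEMFEM:02}; the standing uniqueness hypothesis on \eqref{eq:BEMFEM:02} then gives $v\equiv0$, i.e.\ $w=\widetilde\omega$ on $\Omega_{12}$.

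The final step is a gluing argument. Since $\overline\Omega_1\subset\Omega_2$ we have $\Omega_2\cup\Omega_1^{\rm c}=\mathbb{R}^m$ with overlap $\Omega_{12}$, so the function $u$ that equals $w$ on $\Omega_2$ and $\widetilde\omega$ on $\Omega_1^{\rm c}$ is unambiguously defined and belongs to $H^1_{\rm loc}(\mathbb{R}^m)$. Being a weak solution of a second-order PDE is a local property and $u$ is a weak solution of $\Delta u+k^2n^2u=0$ on each member of the open cover $\{\Omega_2,\Omega_1^{\rm c}\}$ (using $n\equiv1$ on $\Omega_1^{\rm c}$), hence on all of $\mathbb{R}^m$; near infinity $u=\widetilde\omega=\KGammaOmegaOneC f_\Gamma$ satisfies the Sommerfeld radiation condition by construction. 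Thus $u$ solves \eqref{eq:theproblem} with $u^{\rm inc}\equiv0$, and the uniqueness of \eqref{eq:theproblem} forces $u\equiv0$; therefore $f_\Sigma=\gamma_\Sigma w=0$ and $f_\Gamma=\gamma_\Gamma\widetilde\omega=0$, which is the desired injectivity.

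The main obstacle will be the gluing step: one must verify that the interior wave field $w$ and the exterior scattered field $\widetilde\omega$ genuinely patch into a single $H^1_{\rm loc}(\mathbb{R}^m)$ function that is a global weak solution of the Helmholtz equation, with no spurious transmission jump across $\Sigma$ or $\Gamma$; this is precisely the point at which the uniqueness hypothesis on \eqref{eq:BEMFEM:02} is needed, since it is what guarantees $w$ and $\widetilde\omega$ coincide on the whole overlap rather than merely on its boundary. The compactness of ${\cal K}$, the regularity bootstrap, and the passage from injectivity to invertibility are routine once Lemmas~\ref{lemma:01} and~\ref{lemma:02} and the already-assumed well-posedness of \eqref{eq:FEM:0} and \eqref{eq:theproblem} are in hand.
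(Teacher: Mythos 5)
Your proposal is correct and follows essentially the same route as the paper: compactness of the off-diagonal blocks from Lemmas~\ref{lemma:01} and~\ref{lemma:02}, reduction to injectivity via the Fredholm alternative, identification of $w$ and $\widetilde\omega$ on the overlap $\Omega_{12}$ through the uniqueness hypothesis for \eqref{eq:BEMFEM:02}, and gluing into a global radiating solution of \eqref{eq:theproblem} with $u^{\rm inc}=0$ to conclude that both traces vanish. The only cosmetic difference is that the paper passes through $\vartheta=0$ on $\Omega_2^{\rm c}$ and analytic continuation before reading off $g_\Gamma=0$, whereas you read both traces off directly from $u\equiv 0$; both are valid.
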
 
\begin{proof} The continuity of  ${\cal K}: H^0(\Sigma)\times H^0(\Gamma)\to H^s(\Sigma)\times H^s(\Gamma)$ for any $s \ge 0$ has already been established in the   two preceding lemmas. In particular, ${\cal K}$ is compact. Moreover, the null space ${\cal I}-{\cal K}$ consists of smooth functions. For any $( g_\Sigma,g_\Gamma)\in N({\cal I}-{\cal K})$,
we construct
 \[
  v :=\KSigmaOmegaTwo g_\Sigma,\quad
  \vartheta :=\KGammaOmegaOneC g_\Gamma. 
 \]
 Note that $w:=(v-\vartheta) $ defined, in principle, in $\Omega_{12}={\Omega_2\cap \Omega_1^{\rm c}}$ satisfies
 \[
  \Delta w+k^ 2w=0,\quad \text{in } \Omega_{12},\quad \gamma_\Sigma w=\gamma_{\Gamma} w=0.  
 \]
 By the well-posedness of problem \eqref{eq:BEMFEM:02}, we have  $w=0$ in $\Omega_{12}$. We define $u$ on
 $\mathbb{R}^m$ as 
 \[
  u({\bf x}) = \begin{cases}
       v({\bf x}),\quad&\text{if }{\bf x} \in \Omega_2,\\
       \vartheta({\bf x}),\quad&\text{if }{\bf x}\in \Omega_1^{\rm c}.\\
      \end{cases}
 \]
  Note that $u$ is well defined in $\Omega_{12}$, {and} it is  a solution of \eqref{eq:theproblem} with incident wave {$u^{\rm inc}=0$}. Therefore, $u=0$ which implies that $\vartheta=0$ in $\Omega_2^{\rm c}$. The principle of analytic continuation yields that $\vartheta=0$ also in $\Omega_1^{\rm c}$ and therefore $g_\Gamma =\gamma_{\Gamma} \vartheta=0$. Finally,
\[
 g_{\Sigma} =\gamma_{\Sigma } u=\gamma_{\Sigma }\vartheta = 0, 
\]
and hence the desired result follows. 
\end{proof}

\section{A FEM-BEM algorithm for the decomposed model}\label{sec:fem-bem}

In this section we consider the numerical discretizations on the proven equivalent decomposed system~\eqref{eq:BEMFEM}.
In this article, we restrict  to the 
two-dimensional (2-D) case. [The 3-D algorithms and analysis for~\eqref{eq:BEMFEM} will be different  to 
the 2-D case, and in a future work we shall investigate a 3-D  FEM-BEM computational model.]
Briefly, the approach consists of replacing  the continuous operators $\KSigmaOmegaTwo$ and $\KGammaD$ with suitable
high-order  FEM and BEM procedures-based discrete operators. The stability of such a discretization depends on the numerical methods chosen in each case. 

For discretization of  the differential  operator  $\KSigmaOmegaTwo$ based on the  heterogeneous domain model, we could consider a standard FEM with triangular,   quadrilateral  or even more complex  elements. We will choose the first case, for the sake of simplicity, and we expect 
the analysis developed in this case could cover these other types of elements, with appropriate minor modifications. 

The BEM procedure, for discretizing the exterior homogeneous medium associated $\KGammaD$ through boundary integral
operators,  is more open since an extensive range of methods is available in the literature. We will restrict ourselves to the spectral Nystr\"om method~\cite{Kress:2014} {(see also \cite{MR3526814})}. This scheme provides a discretization of the four integral operators of the associated Calderon calculus, and has exponential rate of convergence. In this article, we will make use of high-order discretizations of the \revB{single- and double-layer operators} that are easy to implement.

A key restriction  of the standard Nystr\"om method to achieve spectrally accurate convergence is  the requirement \revB{of a} smooth diffeomorphic parameterization of the boundary. This is because the method starts from appropriate decompositions and factorizations of the kernels of the operators to split 
these into regular and singular parts. This is not a severe restriction in our case since $\Gamma$ is an auxiliary user-chosen smooth curve and can therefore  be  easily constructed as detailed as required.

Next we briefly consider these two known numerical procedures and hence describe  our combined FEM-BEM algorithm and implementation details.

\subsection{The FEM procedure }
Let $\{\mathcal{T}_h\}_h$ be a sequence of regular triangular meshes where $h$ is the discrete mesh parameter,  the diameter of the largest element of the grid. Hence we write  $h\to 0$  to mean that the maximum of the diameters of the elements tends to 0. 
Using ${\cal T}_h$, we  construct the finite dimensional spline  approximation space 
\[
  \mathbb{P}_{h,d}:=\{{v}_h\in{\cal C}^0(\Omega_2) :\ v_h|_{T}\in\mathbb{P}_d, \ \forall T\in {\cal T}_h \}, 
 \]
where $\mathbb{P}_d$ is the space of \revB{bivariate polynomials} 
of degree $d$. We define  the  FEM approximation   $\KSigmaOmegaTwo^h$  to $\KSigmaOmegaTwo$  as follows:
The FEM operator
 \[
\KSigmaOmegaTwo^h:\gamma_\Sigma \mathbb{P}_{h,d}\to  \mathbb{P}_{h,d},
 \]  
for  $f_{\Sigma,h}^{\rm inp} \in  \gamma_\Sigma \mathbb{P}_{h,d}$,
 is constructed as $u_h:=\KSigmaOmegaTwo^hf_{\Sigma,h}^{\rm inp}$, where $u_h \in  \mathbb{P}_{h,d}$ is  the solution of the  discrete FEM equations:
 \begin{equation}\label{eq:4.1}
  \left|
  \begin{array}{l}
b_{k,n}(u_h,v_h)=0,\quad \forall v_h\in \mathbb{P}_{h,d}\cap H_0^1(\Omega_2)\\
\gamma_\Sigma u_h = f_{\Sigma,h}^{\rm inp},
  \end{array},
  \right. \qquad b_{k,n}(u,v)=\int_{\Omega_2}\nabla u \cdot \overline{\nabla v} - k^2 \int_{\Omega_2} n^2\, u  \overline{v}. 
 \end{equation}
The discrete FEM {operator $\KSigmaOmegaTwo^h$ is well defined for sufficiently small $h$}.

\subsection{The BEM procedure }

Let 
\begin{equation}\label{eq:parameterization}
{\bf x}:\mathbb{R}\to\Gamma, \qquad {\bf x}(t) :=(x_1(t),x_2(t)), \quad t \in \mathbb{R}
\end{equation}
be a smooth $2\pi-$periodic regular parameterization of $\Gamma$. 
We denote  by the same symbol ${\rm SL}_k$, ${\rm DL}_k$, ${\rm V}_k$ and ${\rm K}_k$ the parameterized layer potentials and boundary layer operators:
\begin{eqnarray*}
 ({\rm SL}_k\varphi)({{\bf z}})&=&\int_0^{2\pi} \Phi_k({{\bf z}}-{\bf x}(t))\varphi(t)\,{\rm d}t\,\quad\\
 ({\rm DL}_kg)({{\bf z}})&=&\int_0^{2\pi} \big(\nabla_{\bf y} \Phi_k({{\bf z}}-{\bf y})\big)\Big|_{\bf y={\bf x}(t)}\cdot \bm{\mu}(t)\,g(t)\,{\rm d}t
 \end{eqnarray*}
where $\bm{\mu}(t):=(x_2'(t),-x_1'(t))=|{\bf x}'(t)|\:{\bm \nu}\circ{\bf x}(t)$. 
Observe that $|{\bf x}'(t)|$ is incorporated \revB{into} the density in ${\rm SL}_k$ and to the kernel in ${\rm DL}_k$. We follow the same convention for the single- and double-layer weakly singular  boundary integral operators. For high-order approximations,  
it is important to efficiently take care of  the singularities. In particular, for the spectrally accurate
Nystr\"om  BEM solver, we use the following representations  of the layer operators with smooth
$2\pi$ bi-periodic kernels  $A,\ B,\ C,\ D$~\cite{KressColton}:
\begin{eqnarray*}
 ({\rm V}_k\varphi)(s)&=&\int_{0}^{2\pi}A(s,t)\log\sin^2\tfrac{s-t}2\:\varphi(t)\:{\rm d}t+\int_{0}^{2\pi}B(s,t)\varphi(t)\:{\rm d}t,\\ 
 ({\rm K}_kg)(s)&=&\int_{0}^{2\pi}C(s,t)\log\sin^2\tfrac{s-t}2\:g(t)\:{\rm d}t+\int_{0}^{2\pi}D(s,t)g(t)\:{\rm d}t.
 \end{eqnarray*}

The Nystr\"om method, based on a discrete positive integer parameter  $N$,  starts with setting up a uniform grid  
\begin{equation}\label{eq:grid-points}
 t_j := {\tfrac{\pi j}{N}},\quad j = -N+1, \dots, N,
 \end{equation}
 {and the space of trigonometric polynomials of degree at most $N$}
   \begin{equation}\label{eq:def:Tn}
  \mathbb{T}_N:= \spann\langle {\exp({\rm i}\ell t)}\ :\ \ell\in\mathbb{Z}_N\rangle, 
 \end{equation}
 with $\mathbb{Z}_N:= \{-N+1,-N+2,\ldots, N\}.$
 We next introduce the interpolation operator  $\mathrm{Q}_N$
 \begin{equation}\label{eq:interp}
\mathbb{T}_N\ni  \mathrm{Q}_N\varphi \quad \text{s.t.}\quad (\mathrm{Q}_N\varphi)(t_j)=\varphi(t_j),
\qquad j = -N+1, \dots, N,
 \end{equation}
to define  discretizations of the single and double layer operators:
\begin{eqnarray*}
 ({\rm V}_{k}^{N}\varphi)(s)&:=& \int_{0}^{2\pi}{\rm Q}_N(A(s,\cdot)\varphi\big)(t)\log\sin^2\tfrac{s-t}2\:{\rm d}t+\int_{0}^{2\pi}{\rm Q}_N(B(s,\cdot)\varphi\big)(t)\:{\rm d}t,\\ 
 ({\rm K}_{k}^{N}g)(s)      &:=& \int_{0}^{2\pi} {\rm Q}_N(C(s,\cdot)g\big)(t)\log\sin^2\tfrac{s-t}2\:{\rm d}t+\int_{0}^{2\pi}{\rm Q}_N(D(s,\cdot)g\big)(t)\:{\rm d}t.
 \end{eqnarray*}
 We stress that the above integrals can be computed exactly using  the identities:
 \[
-\frac{1}{2\pi}\int_{0}^{2\pi} \log\sin^2\tfrac{t}2\:{\exp({\rm i}\ell t)}\:{\rm d}t = 
-\frac{1}{2\pi} \int_{0}^{2\pi} \log\sin^2\tfrac{t}2 \cos({\ell} t)\:{\rm d}t=\begin{cases}
                                                          \log 4     , &{\ell}=0,\\
                                                          \frac{1}{|{\ell}|}, &{\ell}\ne 0,
                                                         \end{cases}
 \]
 and for $g_N \in \mathbb{T}_N$, 
 \begin{equation}\label{eq:quad}
   \int_{0}^{2\pi} g_N(t)\:{\rm d}t =
 \frac{\pi}{N} \sum_{j=0}^{2N-1} g_N(t_j),
 %
 \end{equation}
which are based on   properties of the trapezoidal/rectangular rule for $2\pi-$periodic functions.

 The high-order approximation evaluation of the potentials is achieved in a similar way:
 \begin{equation}\label{eq:SLNDLN}
 \begin{aligned} 
 \big({\rm SL}_{k}^{N}\varphi\big)({{\bf z}})\ &:=\ \int_{0}^{2\pi}
 {\rm Q}_N(\Phi_k({{\bf z}}-{\bf x}(\cdot))\varphi)(t)\,{\rm d}t, \\ 
 \big({\rm DL}_{k}^{N}g\big)({{\bf z}})\ &:=\ \int_{0}^{2\pi}
 {\rm Q}_N\big(\big(\nabla_{\bf y}\Phi_k({{\bf z}}-{\bf y})\big) \big|_{\bf y={\bf x}(\cdot)}
  \cdot \bm{\nu}(\cdot)\,g\big)
  (t)\,{\rm d}t,
  \end{aligned} 
 \end{equation}
{leading to the rectangular rule approximation as in \eqref{eq:quad}}

Now we are ready to describe  the discrete operator $\KGammaOmegaOneC^N$ that is 
 a high-order  approximation to 
the exterior homogeneous model continuous operator $\KGammaOmegaOneC$.  First, we introduce 
the parameterized counterpart of  the continuous operator in  \eqref{eq:2.6a}, 
\begin{equation}\label{eq:4.2}
{\cal L}_k g :=( \tfrac12\mathrm{I}+\mathrm{K}_k-\mathrm{i}k\mathrm{V}_k)^{-1} g, 
\end{equation} 
(which corresponds to $\sigma\circ{\bf x} = \frac{1}{|{\bf x}|}$). Then  we define 
 \begin{equation}\label{eq:00} 
  \KGammaOmegaOneC^{N}
  g:=(\mathrm{DL}_{k}^{N}-{\rm i}k\mathrm{SL}_{k}^{N})  {\cal L}_{k}^{N}  g,\quad \text{with }
 {\cal L}_{k}^{N}:= (\tfrac12\mathrm{I}+\mathrm{K}_{k}^{N}-\mathrm{i}k\mathrm{V}_{k}^{N})^{-1}.
 \end{equation}

We remark  that the definition of $\KGammaOmegaOneC^N$ requires only evaluation of input functions at the grid points. {In particular, it is well defined on continuous functions.}
Indeed, we have
 \[
 \varphi = {\cal L}_{k}^{N}g 
 \quad\Rightarrow\quad
 {\rm Q}_N\varphi ={\rm Q}_N{\cal L}_{k}^{N}{\rm Q}_N g, 
 \]
 and since the  discrete boundary layer operators only use pointwise values of the density at the grid points (i.e., {${\rm Q}_N\varphi$}), 
 evaluation of $\KGammaOmegaOneC^{N}g$
  requires only values of $g$    at the grid points. So we can  replace, when necessary,   
 \begin{equation}\label{eq:3.10}
    \KGammaOmegaOneC^N g  =  {\KGammaOmegaOneC^N {\mathrm Q}_N g.}
 \end{equation}
The discrete operator  $\KGammaSigma^N g$ {is defined accordingly} by taking the trace of $\KGammaOmegaOneC^N g$ on  $\Sigma$.
Thus our algorithm is based on the idea of taking the trace of FEM and BEM solutions on $\Gamma$ and $\Sigma$ respectively.


\subsection{The FEM-BEM computational model}
In addition to the discrete operators defined above, we need one {last} discrete 
operator to describe the {FEM-BEM algorithm. Let}
\begin{equation}
 {\rm Q}^h_{\Sigma}:{\cal C}^0(\Sigma)\to \gamma_\Sigma \mathbb{P}_{h,d},
\end{equation}
 {denote} the usual Lagrange interpolation operator on $\gamma_\Sigma  \mathbb{P}_{h,d}$, {the inherited finite element space on $\Sigma$}. 
Our full FEM-BEM algorithm is:
\begin{subequations}
\begin{itemize}
\item  {\bf Step 1:} Solve the finite dimensional system\label{eq:NhBEMFEM}
\begin{equation}
 \label{eq:NhBEMFEM:0} \left({\cal I}-\begin{bmatrix}
                                       &{\rm Q}^h_{\Sigma} \KGammaSigma^N\\
                                       {\rm Q}_N\KSigmaGamma^h
                                      \end{bmatrix}
\right)\begin{bmatrix}
                          f^h_{\Sigma}\\
                          f^N_\Gamma
                          \end{bmatrix}=\begin{bmatrix*}[r]
                          {\rm Q}^h_{\Sigma}\gamma_{\Sigma} u^{\rm inc}\\
                          - {\rm Q}_N \gamma_{\Gamma} u^{\rm inc} 
                          \end{bmatrix*}.
\end{equation} 

\item {\bf Step 2:} Construct the FEM-BEM solution 
\begin{equation}
 \label{eq:NhBEMFEM:01}
  u_h:=\KSigmaOmegaTwo^h f_\Sigma^h,\quad 
  \omega_N:=\KGammaOmegaOneC^Nf^N_\Gamma,\quad u_{h,N}:=\begin{cases}
     u_h,\quad&\text{in $\Omega_2$},\\
     \omega_N+u^{\rm inc},\quad&\text{in $\Omega_1^{\rm c}$}.
    \end{cases}
 \end{equation}
\end{itemize}
\end{subequations}

\begin{remark}
We have committed a slight  abuse of notation in the right-hand-side of  \eqref{eq:NhBEMFEM:0} by writing
\[
 {\rm Q}_N \gamma_{\Gamma} u^{\rm inc}
\]
instead of the correct, but more complex,  $
  {\rm Q}_N\big((\gamma_{\Gamma} u^{\rm inc})\circ{\bf x}\big)$. Similarly,  
\[
 {\rm Q}_N\big((\KSigmaGamma^h\,\cdot\,)\circ{\bf x}\big)
\]
should be read in the lower extra-diagonal block of the matrix in \eqref{eq:NhBEMFEM:0}. 
Indeed, this is equivalent to replacing a space on $\Gamma$ with that obtained via the parameterization \eqref{eq:parameterization}. Since both spaces are isomorphic, being strict in the notation for 
description of these operators is not {absolutely} necessary. In particular, we  {avoid} complicated notation and {use} a compact way to describe the algorithm 
and associated  theoretical results. 
\end{remark}

\begin{remark}

 Complete numerical analysis of the FEM-BEM algorithm is beyond the
scope of this article. 
%
%
In a future {work}, we shall carry out a detailed numerical analysis of the FEM-BEM algorithm. {Below we give the main results.} In summary, the analysis is based
on the following assumption on the mesh-grid:

\paragraph{\bf{Assumption 1}} There exists $\varepsilon_0>0$ such that  the sequence of grids $\{{\cal T}_h\}_h$ satisfies  
\begin{equation}\label{eq:mesh_res}
 h^{1/2}  h_D^{-\varepsilon_0}\to 0
\end{equation}
where $D\subset \Omega_2\setminus\overline{\Omega}_0$ is an open neighborhood of  $\Gamma$,  and $h_D$, the maximum of the diameters of the elements of the grid ${\cal T}_h$ with non-empty intersection with $D$.  

We note that this assumption allows locally refined grids, but introduces a very weak restriction on the ratio between the largest element in $\Omega_2$ and the smallest element in $D$. However, since the exact solution is smooth on $D$, {the partial differential equation in this domain is just the homogeneous Helmholtz equation}, and it is reasonable to expect that small elements are not  going to be used in this subdomain. 

\revB{Using Assumption 1, in a future work we shall prove the well-posedness of the  
discrete system~\eqref{eq:NhBEMFEM} and  optimal order of convergence of the FEM-BEM solution.
In particular, after deriving convergence of the individual FEM and BEM approximations, we shall prove
the following convergence result:} For any region $\Omega_R\subset\Omega_1^{\rm c}=\mathbb{R}^2\setminus \overline{\Omega}_1$, 
$0 < \epsilon \leq \epsilon_0$, 
$r\ge 0$, $t\ge d+3/2$,
\begin{eqnarray}
& & \hspace{-0.5in}  \|u-u_h\|_{H^{1}(\Omega_2)}+ \| \omega-\omega_N\|_{H^{r}(\Omega_R)}  \nonumber \\
&\le& C\big(h_D^{d-\varepsilon} N^{-\varepsilon}+h_\Sigma^{d+1/2}+N^{-t}+h_D^d\big)\|u^{\rm inc}\|_{H^{t+1}(\Omega_2)} 
 +C\inf_{v_h\in\mathbb{P}_{h,d}}\|u-v_h\|_{H^1(\Omega_2)}, \label{eq:fin_res}
\end{eqnarray}
where $h_D$ is as in~\eqref{eq:mesh_res} and $h_\Sigma$ is the maximum distance between any two consecutive Dirichlet/constrained nodes in $\mathcal{T}_h$;
$(u, \omega) = (\KSigmaOmegaTwo f_\Sigma,\KGammaOmegaOneC f_\Gamma)$ is the exact solution of \eqref{eq:BEMFEM}; and  $(u_h, \omega_N)$  is the unique solution of the numerical method \eqref{eq:NhBEMFEM}. 
\end{remark}

Next we describe algebraic details required for
implementation of the algorithm, followed by numerical experiments in Section~4 to demonstrate the efficiency of the FEM-BEM algorithm to simulate
wave propagation in the heterogeneous and unbounded medium. 

\subsection{FEM-BEM algebraic systems and evaluation of wave fields}

Simulation of   approximate interior and exterior wave fields $u_{h,N}$ using the solution of~\eqref{eq:NhBEMFEM:0} 
and the representation in~\eqref{eq:NhBEMFEM:01} requires: (i) computing the interior
solution $u_h$ by once solving  the finite element system~\eqref{eq:4.1} using the Dirichlet data~$f_{\Sigma}^h$;
and (ii) the exterior solution ${\omega_N}$ in $\Omega_1^{\rm c}$ by evaluating the layer potential value $(\mathrm{DL}_k^N-\mathrm{i}\mathrm{SL}_k^N){\cal L}_{k}^{N} f^N_\Gamma$, using the representation in~\eqref{eq:SLNDLN}.  

\revA{Since ${\cal L}_{k}^{N} f^N_\Gamma \in \mathbb{T}_N$ and that the dimension of $\mathbb{T}_N$ is $2N$,
using~\eqref{eq:def:Tn}{--\eqref{eq:SLNDLN}}, the degrees of freedom (DoF)  required to compute the exterior solution ${\omega_N}$  is equal to   the
number of interpolatory uniform grid points $t_j,~j=-N+1, \dots, N$ in~\eqref{eq:grid-points} that determine the interpolatory operator
$\mathrm{Q}_N$ in~\eqref{eq:interp}}. 
The linear algebraic system corresponding to the Dirichlet  
problem~\eqref{eq:4.1}  for  
$u_h \in  \mathbb{P}_{h,d}$ is obtained by using an ansatz that is  a linear combination of the basis functions spanning
$\mathbb{P}_{h,d}$. Coefficients in the $u_h$ ansatz are values of  $u_h$ at the nodes that determine $\{\mathcal{T}_h\}_h$.
The nodes include constrained/boundary Dirichlet nodes on $\Sigma$ and free/interior non-Dirichlet nodes in $\Omega_2$.

Henceforth, for a chosen mesh for the bounded domain $\Omega_2$, 
we use the notation  $M$ and $L$ to denote  the number of Dirichlet- and  free-nodes nodes in the mesh, respectively.
The FEM system~\eqref{eq:4.1} to compute  the solution $u_h$ 
leads to an $L$-dimensional linear system  for the unknown vector 
${\bf u}_L$ (that are values  of ${u}_h$ at the  interior nodes). The system is  governed  
by a real symmetric sparse matrix, say, ${\bf A}_{L}$. 
The matrix ${\bf A}_{L}$ is obtained by eliminating  the row and column vectors associated at the boundary nodes. 
Let  ${\bf D}_{L,M}$ be the  $L\times M$    matrix that is used to move the Dirichlet condition 
to the right-hand-side of the system. Thus for a given  Dirichlet data vector $\widehat{{\bf  f}}_{M}$, we may theoretically write 
${\bf u}_L = {\bf A}^{-1}_{L}{\bf D}_{L,M}  \widehat{{\bf  f}}_{M}$. Let  ${\bf T}_{2N,L}$ be  the $2N\times L$ {sparse} matrix so that
${\bf T}_{2N,L} {\bf u}_L (= {\bf T}_{2N,L}  {\bf A}^{-1}_{L}{\bf D}_{L,M}  \widehat{{\bf  f}}_{M})$  is the {trace} of  the finite element solution ${w}_h$ of~\eqref{eq:4.1} 
at the $2N$ interior points  $\mathbf{x}(t_j) \in \Gamma, j = -N+1, \dots, N$ that are the BEM grid points. 

For describing
the full FEM-BEM system, using the above representation, it is convenient to define the $2N \times M$ matrix 
\begin{equation}\label{eq:2N-M}
{\bf \widetilde{K}}_{2N,M}: = {\bf T}_{2N,L}  {\bf A}^{-1}_{L}{\bf D}_{L,M}.
\end{equation}
The matrix ${\bf A}^{-1}_{L}$  in~\eqref{eq:2N-M}, in general, should not be computed in practice. 
We may consider instead a ${\bf L}_L {\bf D}_{L}{\bf L}_L^\top$
 factorization~\cite{Duff:2004} (for example, implemented in the Matlab command {\tt ldl}), where ${\bf D}_L$ is a block diagonal matrix with $1\times 1$ or $2\times 2$ blocks and ${\bf L}_L$ is a block (compatible) unit  lower triangular matrix. Hence, each multiplication by $ {\bf A}^{-1}_{L}$ is reduced to solving  two (block) triangular and one $2\times 2$ block diagonal system which can be efficiently done, leading to evaluation of 
${\bf \widetilde{K}}_{2N,M}$ on $M$-dimensional vectors.
Of course the ${\bf L}_L {\bf D}_{L}{\bf L}_L^\top$ factorization is a relatively expensive process,   but worthwhile in our method to simulate the complex
heterogeneous and unbounded region model. (We further quantify this process using numerical experiments in Section~\ref{sec:num-exp}.)

The ansatz for  the unknown density $f^N_\Gamma \in \mathbb{T}_N$  is a linear combination of $2N$ known basis
functions ${\exp({\rm i}\ell t)}, ~\ell = -N+1, \dots, N$ in~\eqref{eq:def:Tn} that span $\mathbb{T}_N$.
The $2N$-dimensional BEM system  for the unknown vector 
$\widetilde{\bf f}_{2N}$ (that are values  of the unknown density at the  Nystr\"om  node points $t_j,~j=-N+1, \dots, N$)
is governed by a complex dense matrix and an input $2N$-dimensional  vector  $\widetilde{{\bf  f}}_{2N}$ determined
by the Dirichlet data on $\Gamma$ in the exterior homogeneous model~\eqref{eq:BEM:0} evaluated
at $t_j,~j=-N+1, \dots, N$. We may   write 
\begin{equation}\label{eq:bem_vec}
{\bf B}_{2N}{\bm{\varphi}}_{2N} = \widetilde{{\bf  f}}_{2N},
\end{equation}
 where
${\bf B}_{2N}$ is the $2N \times 2N$ Nystr\"om matrix corresponding to the discrete
boundary integral operator in~\eqref{eq:00}. Similar to  ${\bf T}_{2N,L}$, let ${\bf P}_{M,2N}$ be  the matrix  representation of the (discrete) combined potential generated by a density  at the  $M$ Dirichlet nodes of ${\cal T}_h$. That is,
${\bf P}_{M,2N} \bm{\varphi}_{2N}$ is the vector form of 
${\rm Q}_{h}^\Sigma \gamma_\Sigma(\mathrm{DL}_{k}^{N}-{\rm i}k\mathrm{SL}_{k}^{N})\varphi$, following the BEM representation~\eqref{eq:00} for evaluation
of the exterior field  at the $M$ Dirichlet nodes on $\Sigma$. Similar to the interior problem based matrix in~\eqref{eq:2N-M}, corresponding
to the exterior field it is convenient to introduce the $M \times 2N$ matrix 
\begin{equation}\label{eq:M-2N}
{\bf \widehat{K}}_{M,2N}: =  {\bf P}_{M,2N}{\bf B}_{2N}^{-1}.
\end{equation}
\revA{Obviously, $M<<L$ (since $M\sim L^{1/2}$ in the 2D case for quasi-uniform grids) and, thanks to the choice  of  the smooth boundary $\Gamma$, the standard Nystr\"om BEM is spectrally accurate, which further implies that $2N < < M$.   (We will quantify this  substantially smaller ``$<<$'' claim using numerical experiments in Section~\ref{sec:num-exp}.)}
Thus the cost of setting up an ${\bf LU}$ decomposition of the dense matrix ${\bf B}_{2N}$ is
negligible and consequently the matrix $\widehat{\bf K}_{M,2N}$ product with any $2N$-dimensional 
vector can be efficiently evaluated.  

The implementation procedure described above to compute the interior and exterior fields using~\eqref{eq:NhBEMFEM:01} requires 
 {the   $M$-dimensional vector $\widehat{{\bf  f}}_M$ with the values of the unknown  at the Dirichlet nodes}
  on $\Sigma$ and the $2N$-dimensional 
${\widetilde{{\bf  f}}_{2N}}$ at the $2N$ uniform grid points  ${\bf x}(t_j),~j= -N+1, \dots, N$ on $\Gamma$.
Since $\Sigma$ and $\Gamma$ are artificial  boundaries 
for the decomposition of the original model,  the vectors $\widehat{{\bf  f}}_M, \widetilde{{\bf  f}}_{2N}$ 
are unknown.  The interface system~\eqref{eq:NhBEMFEM:0}, that uses the data $u^{\rm inc}$ in the original model, 
completes the process to compute $\widehat{{\bf  f}}_M, \widetilde{{\bf  f}}_{2N}$. In particular, for 
the matrix-vector
form description of~\eqref{eq:NhBEMFEM:0},  we obtain  input data vectors, say  
$\widehat{{\bf u}}_{M}^{\rm inc}$ and $\widetilde{{\bf u}}_{2N}^{\rm inc}$, using the vector form representations of
${\rm Q}^h_{\Sigma}\gamma_{\Sigma} u^{\rm inc}$ and ${\rm Q}_N \gamma_{\Gamma} u^{\rm inc}$, respectively.

More precisely, using~\eqref{eq:2N-M}--\eqref{eq:M-2N}, the
matrix-vector algebraic system corresponding to ~\eqref{eq:NhBEMFEM:0}
takes the form
\begin{equation}\label{eq:system:01}
 \begin{bmatrix}
  {\bf I}_M & -{\bf \widehat{K}}_{M,2N}\\ \\
  -{\bf \widetilde{K}}_{2N,M}& {\bf I}_{2N} 
 \end{bmatrix}
 \begin{bmatrix}
 \widehat{{\bf  f}}_M\\ \\
 \widetilde{{\bf  f}}_{2N}
 \end{bmatrix}
=
 \begin{bmatrix}
  \widehat{{\bf u}}_{M}^{\rm inc}\\ \\ 
  -\widetilde{{\bf u}}_{2N}^{\rm inc}
 \end{bmatrix}
\end{equation} 
{where  ${\bf I}_{M}, {\bf I}_{2N}$ are, respectively, the $M \times M$ and $2N \times 2N$   identity matrices.}


In our implementation, instead of solving the full linear system in~\eqref{eq:system:01}  we work with the Schur complement 
\begin{subequations}\label{eq:system:02}
 \begin{eqnarray}
 (\underbrace{{\bf I}_{2N}-{\bf \widetilde{K}}_{2N,M}{\bf \widehat{K}}_{M,2N}}_{{=:}{\bf A}_{\rm Sch}})\widetilde{\bf f}_{2N} &=& 
 -\widetilde{\bf u}_{2N}^{\rm inc}+{\bf \widetilde{K}}_{2N,M}\widehat{\bf u}_M^{\rm inc}, \label{eq:system:02a}\\
 \widehat{\bf f}_M &=& \widehat{\bf u}_M^{\rm inc}+{\bf \widehat{K}}_{M,2N}\widetilde{\bf f}_{2N} \label{eq:system:02b}.
\end{eqnarray}
\end{subequations}
After solving for  $\widetilde{\bf f}_{2N}$ in~\eqref{eq:system:02a}, the main computational cost for finding 
$\widehat{\bf f}_M$ involves only the matrix-vector  multiplication  ${\bf \widehat{K}}_{M,2N}\widetilde{\bf f}_{2N}$. The latter
requires solving a BEM system, which can be carried out using a direct solve because  $2N$ is relatively small.

\section{Numerical experiments}\label{sec:num-exp}

In this section we consider two sets of numerical experiments to demonstrate the overlapping decomposition framework based
FEM-BEM algorithm. In the first set of experiments,   the heterogeneous domain $\Omega_0$ has non-trivial curved boundaries and the  refractive index function  
$n$ is smooth; and in the second set of experiments $\Omega_0$ is a complex non-smooth structure and \revB{$n$ is a discontinuous function}. For these two sets of  experiments, we consider the $\mathbb{P}_d$ Lagrange finite elements  with $d=2,3,4$
for the interior FEM model with mesh values $h$, and  several values of the Nystr\"om method parameter  $N$ to achieve spectral accuracy and to make the BEM errors
less than those in the FEM discretizations. The reported CPU times in the section are based on serial 
a implementation of the algorithm in Matlab (2017b)  on a desktop with a 10-core Xeon E5-2630  processor and $128$GB RAM.

\revA{In our numerical experiments to compute $\widetilde{\bf f}_{2N}$ in~\eqref{eq:system:02a}, we solve the linear
system using: (i)  the iterative GMRES method with the (relative) residual set to  $10^{-8}$ in all the cases; and (ii) the direct Gaussian elimination solve which requires the full matrix  ${\bf A}_{\rm Sch}$ in~\eqref{eq:system:02a}. Both  approaches are compared for the numerical experiments in Section~\ref{subsec:dir_iter_comp}. 
As an error indicator of our full FEM-BEM algorithm,
we analyze the widely used quantity of interest (QoI) in numerous wave propagation applications: the far-field  arising from both the interior and exterior fields induced by the incident field impinging from a particular direction. 
For a large class of inverse wave models~\cite{KressColton}, the {far-field}
measured at several directions 
is fundamental to infer various 
properties of the wave propagation medium.}

\revA{To  computationally verify the quality of our FEM-BEM algorithm in Section~\ref{sec:num-exp}, we analyze the numerical far-field error
	at thousands of  direction unit vectors $\bf z$. Using~\eqref{eq:bem_vec}, we define a   spectrally accurate approximation to the QoI as 
	\begin{equation}\label{eq:far-z}
	\left(\bm{\mathcal{F}}_N\bm{\varphi}_{2N}\right)({\bf z}):=  \sqrt{\frac{k}{8\pi} }\exp\big(-\tfrac14\pi{\rm i}  \big)\frac{\pi}{N}  \sum_{j=-N+1}^{N} \exp(
	-\mathrm{i} k({\bf z}\cdot{\bf x}(t_j))) \big[{\bf z}\cdot(x_2'(t_j),-x_1'(t_j))+1\big] \left[\bm{\varphi}_{2N}\right]_j. 
	\end{equation}
	The exact representation of the QoI is~\cite{KressColton} 
	\begin{equation}\label{eq:far}
	\left({\cal F}\varphi\right)({\bf z}):=
	\sqrt{\frac{k}{8\pi } } \exp\big(-\tfrac14\pi{\rm i}  \big)\int_{0}^{2\pi} \exp(
	-\mathrm{i} k ({\bf z}\cdot{\bf x}(t))) \big[{\bf z}\cdot(x_2'(t),-x_1'(t))+1\big]\varphi (t)\,{\rm d}t. 
	\end{equation}
	Using the angular representation of the direction vectors ${\bf z}$, 
	we compute approximate far-fields  at $1,000$ uniformly distributed angles. We report the QoI
	errors  for various  grid parameter sets $(h,N)$, and demonstrate high-order convergence of our FEM-BEM algorithm. 
	The maximum  of  the estimated errors in the approximate QoI,  using the values at the $1,000$ uniform directions, are used 
	below to validate the efficiency and high-order accuracy of the FEM-BEM algorithm. }
\subsection{Star-shaped domain with five-star-pointed refractive index} 
In Experiment 1 set, we choose $\Omega_0$ to be the star-shaped region
sketched in the interior of the disk $\Omega_1$ in  Figure~\ref{fig:expPatrickStar}, and the 
refractive index function is defined using  polar coordinates as 
\[
n^2(r,\theta):= 1+16 \chi\Big(\frac{1}{0.975}\Big[\frac{r}{2+0.75\sin(5\theta)}-0.025\Big]\Big),
\]
with 
\[
 \chi(x) := \frac{1}{2}(\widetilde{\chi}(x)+1-\widetilde{\chi}(1-x)),\quad
 \widetilde{\chi}(x) := \begin{cases}
 1, &\text{if $x\le 0$},\\
                        \exp\big(\frac{1}{e-e^{1/x}}\big),\quad&\text{if } x\in (0,1),\\
   0, &\text{if $x>1$}.            
                       \end{cases}
\]
Notice that $\widetilde{\chi}(x)$ is a smooth  cut-off  function with $\mathop{\rm supp}\chi =(-\infty,1]$. Therefore, the function $\chi$ is  smooth and also {\em symmetric} around $1/2$: $\chi(1-x)=1-\chi(x)$ for any $x$. 
    
\begin{figure}[ht]

\vspace{-0.1in}
\centerline{
 \includegraphics[width = 0.8\textwidth]{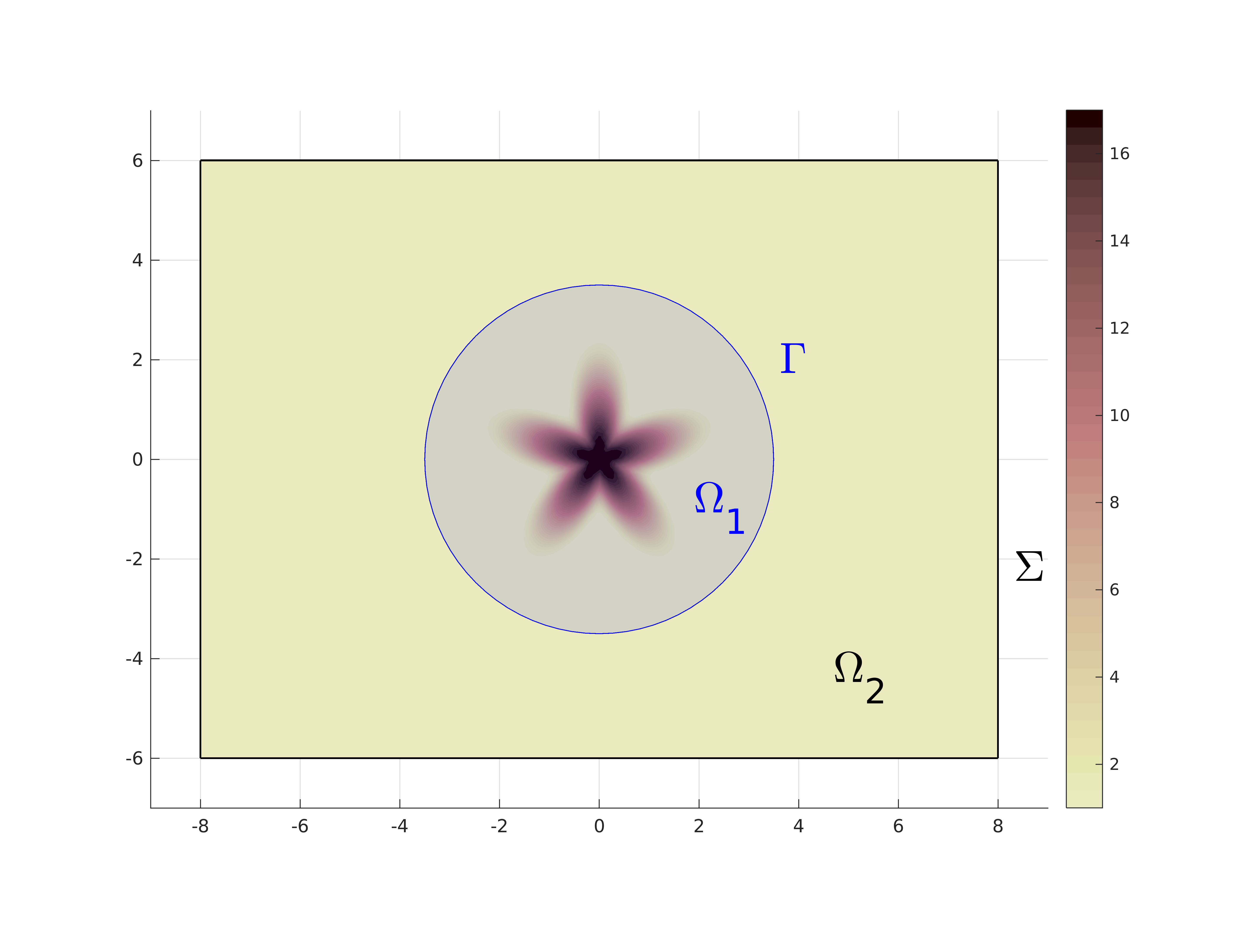}} 
 
 \vspace{-0.6in}
 \caption{\label{fig:expPatrickStar} Heterogeneous medium  and artificial boundaries for Experiment 1.}
\end{figure}


\revA{For this example, 
 $\Omega_2$  is the rectangle $[-6,6]\times [-8,8]$ with  boundary $\Sigma$, so that the
diameter of  the interior domain is $20$. Thus, for a chosen wavenumber $k$, the interior heterogeneous model is of wavelength $10k/\pi$. For our numerical experiments we choose three wavenumbers $k=\pi/4, \pi,4\pi$, to simulate the problems with acoustic characteristic size of $2.5, 10, 40$  wavelengths, respectively. The smooth boundary $\Gamma$ for this example is a  circle centered at zero and radius $3.5$}


For the interior FEM model,  the initial coarse grid consists of $2,654$ triangles, which is refined up to four times, in the usual way. We show the simulated  far-field error results in \revB{Tables~\ref{table:exp01:08} and \ref{table:exp01:09}
using $\mathbb{P}_3$ and  $\mathbb{P}_4$ elements, respectively.}  
In these tables  estimates of the 
(relative) maximum  errors  in computing the QoI far-fields  are presented as well as the number (given within parentheses) of GMRES iterations needed to achieve convergence with the residual tolerance  $10^{-8}$. Next we discuss some key aspects of the computed results  in~Tables~\ref{table:exp01:08}-\ref{table:exp01:09}.

To compute the errors for a set of discretization parameters, as  {\em exact/truth} solutions we used the FEM-BEM algorithm solutions obtained  with $N=640$ and the next level of FEM mesh refinement to these in the tables. The fast spectrally accurate  convergence of the  Nystr\"om BEM, after achieving a couple of
digits of accuracy, can be observed  by  following the far-field maximum errors in the last columns in Tables~\ref{table:exp01:08}-\ref{table:exp01:09}. 
In particular the last columns results, for the FEM  spline degree {$d =  3, 4$} cases, demonstrate that
relatively  small DoF $2N$ is required for the Nystr\"om BEM solutions  accuracy  to match that of the FEM solutions,
especially  compared to the FEM DoF $L$. 
The last rows in 
Tables~\ref{table:exp01:08}-\ref{table:exp01:09} clearly demonstrate that higher values of $N$  are 
not useful because of  the stagnation of the errors due to limited accuracy of the  FEM discretizations.
Further, a closer analysis of the results in Tables~\ref{table:exp01:08}-\ref{table:exp01:09}  shows that the 
the computed far-fields exhibit superconvergence, with ${\cal O}(h^{2d})$ errors. In addition, 
in Figure~\ref{fig:kpi} we demonstrate the  faster convergence of the  (Experiment 1) smooth total field solutions in the $H^1$- norm, 
and compare with the rate of convergence for  a non-smooth solution (Experiment 2) case.

In the Experiment 1 set, with a smooth heterogeneous region  $\Omega_0$ and a smooth refractive index 
function $n$, it can be shown that the exact near-field solution for the model problem is smooth.
However, this fact alone is not sufficient to explain in detail the superconvergence of the  computed far-fields.
We may conjecture that some faster convergence is {occurring} in the background for
the near-field in some weak norms, and that the calculation of the far-fields is benefitting  from this to 
achieve the superconvergence. In a future work, we shall explore the  numerical analysis  our FEM-BEM algorithm.

\begin{table}[h] \small\setlength{\tabcolsep}{4pt}

\begin{center}\tt
 
 \begin{tabular}{r|rl|rl|rl|rl|rl}
    $N/{L}$&\multicolumn{2}{c|}{7,999 }&   \multicolumn{2}{c|}{31,657}    &     \multicolumn{2}{c|}{125,953}   &     \multicolumn{2}{c|}{502,465}   &   \multicolumn{2}{c}{2,007,169}\\ 
 \hline&&&&&&&&\\      
    010  &   3.1e-03& (012) & 6.6e-05& (012)  &     2.2e-06& (012)  &  1.2e-06& (012)   & 1.2e-06& (012) \\
    020  &   3.1e-03& (012) & 6.5e-05& (012)  &     2.0e-06& (012)  &  2.5e-10& (012)   & 4.7e-11& (012) \\
    040  &   3.1e-03& (012) & 6.5e-05& (012)  &     2.0e-06& (012)  &  1.8e-10& (012)   & 1.4e-11& (012)  \\
    080  &   3.1e-03& (012) & 6.4e-05& (012)  &     2.0e-06& (012)  &  1.5e-10& (012)   & 9.0e-12& (012)  \\
  \end{tabular}
 \end{center}

\begin{center}\tt
 \begin{tabular}{r|rl|rl|rl|rl|rl}
    $N/{L}$&\multicolumn{2}{c|}{7,999 }&   \multicolumn{2}{c|}{31,657}    &     \multicolumn{2}{c|}{125,953}   &     \multicolumn{2}{c|}{502,465}   &   \multicolumn{2}{c}{2,007,169}\\ 
 \hline&&&&&&&&\\     
    010  &     4.3e-01 & (020) &  1.8e-01 & (020)  &  1.8e-01 & (020)   &  1.8e-01 & (020)   &   1.8e-01 & (020) \\
    020  &     3.5e-01 & (031) &  1.6e-02 & (031)  &  3.3e-04 & (031)   &  7.3e-06 & (031)   &   5.2e-06 & (031) \\
    040  &     3.5e-01 & (031) &  1.6e-02 & (031)  &  3.2e-04 & (031)   &  6.0e-06 & (031)   &   3.5e-07 & (031) \\
    080  &     3.5e-01 & (031) &  1.6e-02 & (031)  &  3.3e-04 & (031)   &  6.0e-06 & (031)   &   1.4e-07 & (031) \\
  \end{tabular}
 \end{center} 
  
   \begin{center}\tt
 \begin{tabular}{r|rl|rl|rl|rl|rl}
    $N/{L}$&\multicolumn{2}{c|}{7,999 }&   \multicolumn{2}{c|}{31,657}    &     \multicolumn{2}{c|}{125,953}   &     \multicolumn{2}{c|}{502,465}   &   \multicolumn{2}{c}{2,007,169}\\ 
 \hline&&&&&&&&\\        
    020  &    2.8e+00&(040) &  1.4e+00&(040)   & 1.1e+00 &(040)   &  1.4e+01&(040)  &   4.0e+00 &(040)\\  
    040  &    1.8e+00&(060) &  5.2e-01&(080)   & 6.0e-01 &(080)   &  9.1e-02&(080)  &   8.6e-02 &(080)\\  
    080  &    2.3e+00&(063) &  5.9e+00&(100)  & 6.3e-01 &(100)  &  4.7e-02&(102) &   8.3e-04 &(102)\\  
    160  &    2.2e+00&(063) &  5.0e+00&(100)  & 6.3e-01 &(100)  &  4.7e-02&(102) &   8.3e-04 &(102)
  \end{tabular}
  \end{center}
  \vspace{-0.2in}
  \caption{\label{table:exp01:08}Experiment 1: $\mathbb{P}_3$ Finite element space and 
  $k = \pi/4, \pi, 4\pi$ (top, middle, bottom tables). \revAB{In the first row and the first column, ${L}$ and $2N$ are the number of degrees of freedom used to compute  the FEM  and BEM  solutions, respectively. The number of GMRES iterations required for solving the system, with  a residual tolerance of $10^{-8}$, is given within the parenthesis.  Estimated (relative) uniform errors in the far-field are given in columns two to five.}}

  \end{table}

 
\begin{table}[h] \small
\begin{center}\tt

 \begin{tabular}{r|rl|rl|rl|rl|rl}
    $N/{L}$&   \multicolumn{2}{c|}{14,145}&   \multicolumn{2}{c|}{56,129} & \multicolumn{2}{c|}{223,617 } & \multicolumn{2}{c|}{892,673}  &  \multicolumn{2}{c}{3,567,105}\\ 
 \hline&&&&&&&&\\      
    010  &   3.9e-04 & (012) &  9.4e-06  & (012)  &     1.4e-06 & (012)  &  1.2e-06 & (012)   & 1.2e-06 & (012)\\
    020  &   3.9e-04 & (012) &  8.9e-06  & (012)  &     2.5e-07 & (012)  &  6.9e-10 & (012)   & 8.4e-11 & (012)\\
    040  &   3.9e-04 & (012) &  8.9e-06  & (012)  &     2.5e-07 & (012)  &  7.0e-10 & (012)   & 1.0e-10 & (012) \\
    080  &   3.9e-04 & (012) &  8.9e-06  & (012)  &     2.5e-07 & (012)  &  7.0e-10 & (012)   & 9.9e-11 & (012) \\
  \end{tabular}
 \end{center}

\begin{center}\tt
 \begin{tabular}{r|rl|rl|rl|rl|rl}
     $N/{L}$&   \multicolumn{2}{c|}{14,145}&   \multicolumn{2}{c|}{56,129} & \multicolumn{2}{c|}{223,617 } & \multicolumn{2}{c|}{892,673}  &  \multicolumn{2}{c}{3,567,105}\\ 
 \hline&&&&&&&&\\          
    010  &    2.0e-01 & (020)  &   1.8e-01 & (020) &   1.8e-01 & (020)    &   1.8e-01 & (020)  &1.8e-01 & (020) \\
    020  &    6.9e-02 & (031)  &   7.1e-04 & (031) &   6.9e-06 & (031)    &   5.4e-06 & (031)  &5.4e-06 & (031) \\
    040  &    6.9e-02 & (031)  &   7.1e-04 & (031) &   3.9e-06 & (031)    &   3.2e-08 & (031)  &4.7e-10 & (031) \\
    080  &    6.9e-02 & (031)  &   7.1e-04 & (031) &   4.0e-06 & (031)    &   2.4e-08 & (031)  &4.0e-10 & (031) \\
  \end{tabular}
  \end{center}

  \begin{center}\tt
 \begin{tabular}{r|rl|rl|rl|rl|rl}
    $N/{L}$&   \multicolumn{2}{c|}{14,145}&   \multicolumn{2}{c|}{56,129} & \multicolumn{2}{c|}{223,617 } & \multicolumn{2}{c|}{892,673}  &  \multicolumn{2}{c}{3,567,105}\\ 
 \hline&&&&&&&&\\          
    020  &     5.0e+00 & (040)  &   9.3e+00 & (040)  &   3.1e+00 & (040)       &   4.1e+00 & (040)   & 3.9e+00  & (040)\\
    040  &     3.7e+00 & (080)  &   4.9e-01 & (080)  &   2.4e-01 & (080)       &   8.5e-02 & (080)   & 8.6e-02  & (080)\\
    080  &     9.1e+00 & (098)  &   4.6e-01 & (100) &   2.6e-01 & (102)      &   2.0e-03 & (102)  & 8.8e-06  & (102) \\
    160  &     9.8e+00 & (098)  &   4.6e-01 & (100) &   2.6e-01 & (102)      &   2.0e-03 & (102)  & 8.8e-06  & (102)  
  \end{tabular}
  \end{center}
   \vspace{-0.2in}
  \caption{Experiment 1: \label{table:exp01:09} $\mathbb{P}_4$ Finite element space and 
$k = \pi/4, \pi, 4\pi$ (top, middle, bottom tables). \revAB{In the first row and the first column, ${L}$ and $2N$ are the number of degrees of freedom used to compute  the FEM  and BEM  solutions, respectively. The number of GMRES iterations required for solving the system, with a residual tolerance of $10^{-8}$, is given within the parenthesis.  Estimated (relative) uniform errors in the far-field are given in columns two to five.}}
 
\end{table}

\clearpage
  
 In Figure~\ref{fig:gmres}, we illustrate the convergence of the GMRES iterations and show that as
the frequency is increased four-fold, 
\revA{the number of required iterations for the solutions to converge with the $10^{-8}$ residual  tolerance
increases at (a slightly) slower rate.}
 \begin{figure}[h]
\centerline{
 \includegraphics[width = .8\textwidth ]{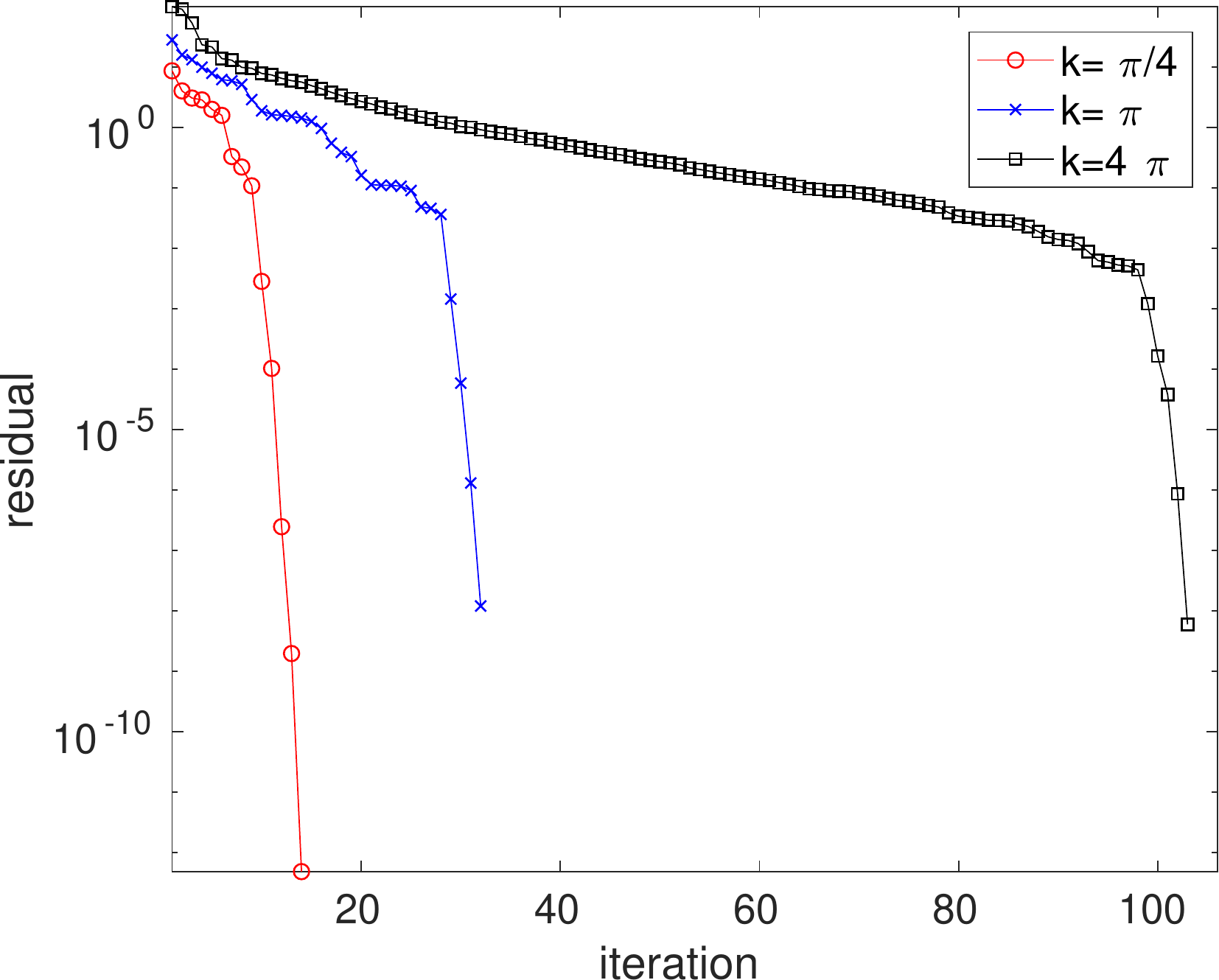}} 
 \caption{\label{fig:gmres} \revA{Number of GMRES iterations and  residual errors for   Experiment 1 simulations with $k=\pi/4$, $k=\pi$ and $k=4\pi$ 
 using the  $\mathbb{P}_3$ finite element space on  a grid 
 with the FEM DoF $L= 502,465$  and the BEM DoF $2N=160$.}}
\end{figure}
 
\revA{Next we consider   how the size of the overlapped FEM-BEM region $\Omega_{12}$ affects the speed of convergence of the GMRES iterations. To this end,  we have run a set of additional experiments for the star-shaped (Experiment 1) problem with $k = \pi$,  using several
choices of $\Gamma$, to obtain  larger to smaller diameter overlapped regions $\Omega_{12}$. In particular, we chose several BEM smooth boundaries
$\Gamma$  to be  circles centered at the origin with radii spanning from 2.625 (closer to the heterogeneity) to 5.856 (closer to  the 
FEM boundary $\Sigma$), yielding several  $\Omega_{12}$, respectively, with larger to smaller sizes. For all these  simulation cases, we fixed the BEM DoF to be  $2N = 160$, and the fixed $\mathbb{P}_3$ elements were obtained  using $445,440$ triangles with the number of free-nodes (FEM DoF) to be 
$L = 1,106,385$.  We present the corresponding results in Figure~\ref{fig:new}.}

\revA{In the left panel of Figure~\ref{fig:new}, we can see a sample of the curves $\Gamma$ used for the set of experiments with varying size $\Omega_{12}$,
and correspondingly in the right panel of Figure \ref{fig:new}, we present the number of GMRES iterations required to converge with, again, the residual tolerance $10^{-8}$. Results in Figure \ref{fig:new} clearly demonstrate that  the number of GMRES iterations increases as the size of the overlapped region $\Omega_{12}$ decreases. This can be explained as follows: At the continuous level, the interacting operators $\KGammaSigma$ and 
$\KSigmaGamma$ tend to lose the compactness property, as the overlapped region becomes thinner. (We shall explore this 
observation theoretically in a future work.). 
On the other hand, it is interesting to note  from these experiments that the choice of $\Gamma$ being very close to the heterogeneity  does not affect the 
convergence of the GMRES iterations. We could conjecture that this might happen for the considered set of experiments because the exact solution 
for Experiment 1 problem is smooth. However, we have noticed a similar  behavior for the next   Experiment 2 problem, with a complex 
non-smooth heterogeneous region, for which regularity of the total wave field is limited.}
\begin{figure}
	\[
\includegraphics[width=0.51\textwidth]{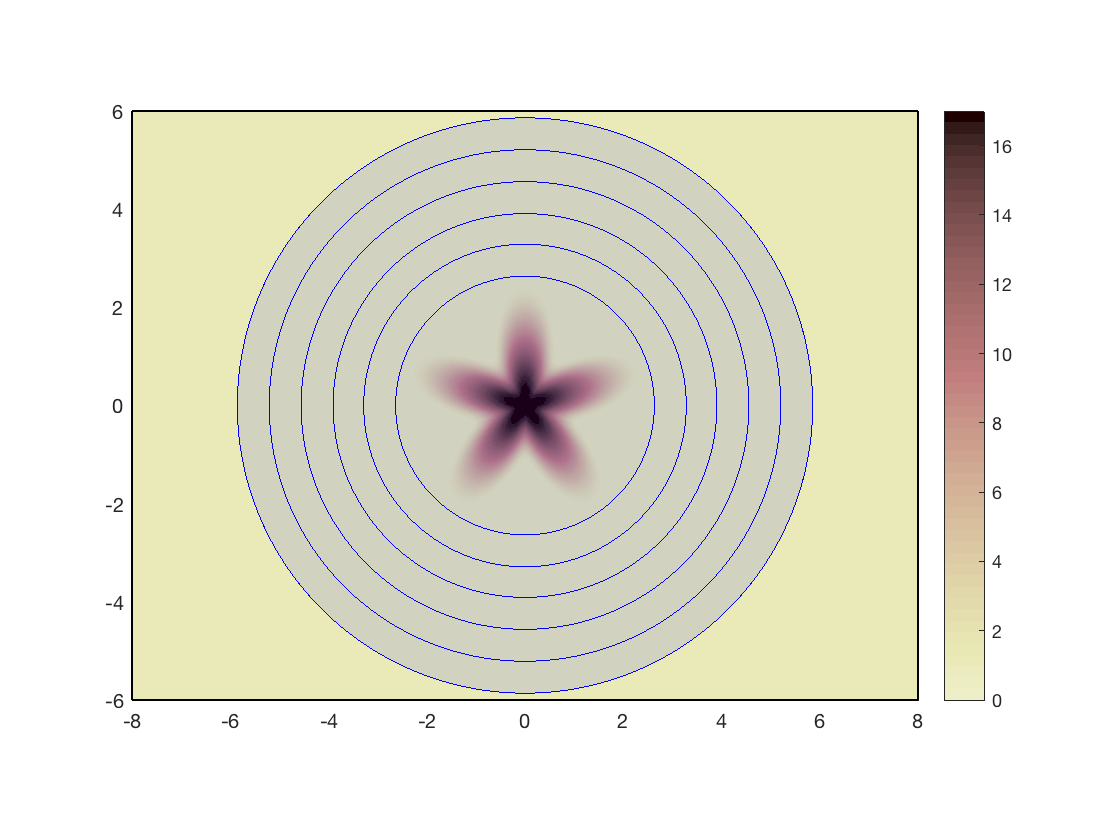}
\includegraphics[width = 0.52\textwidth]{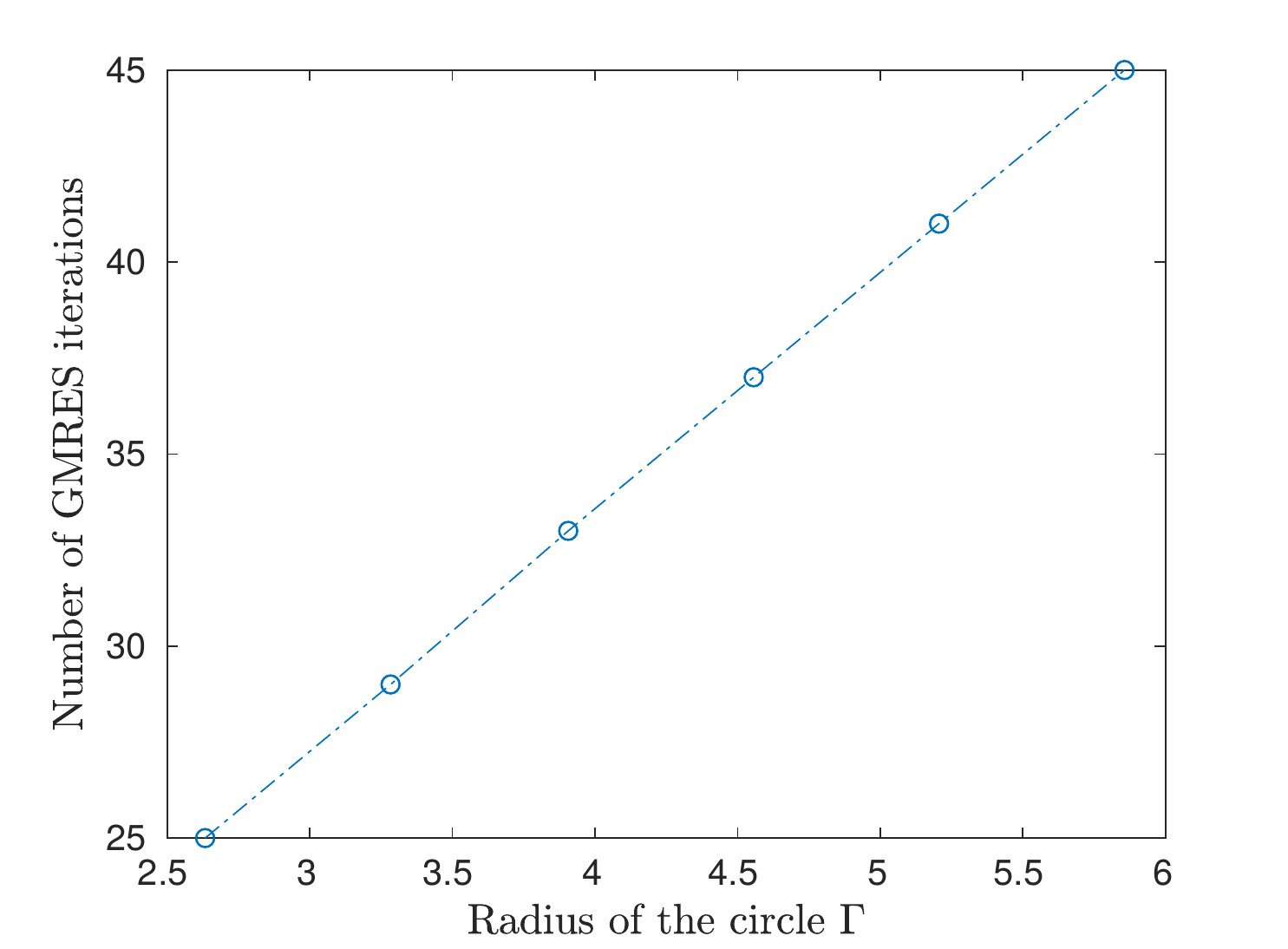}
\]
\vspace{-0.5in}
\caption{\label{fig:new}\revA{Dependence of the number of GMRES iterations on the size of the overlapping region: On the left, various choices of 
the smooth (circular) interface $\Gamma$. On the right, 
radii  of the circles $\Gamma$ vs. number of GMRES iterations required for convergence with a  residual tolerance of $10^{-8}$.}}
\end{figure}

\subsection{{\em Pikachu}-shaped domain with piecewise smooth refractive index}

%

\begin{figure}[h]
\centerline{
  \includegraphics[height = .6\textwidth]{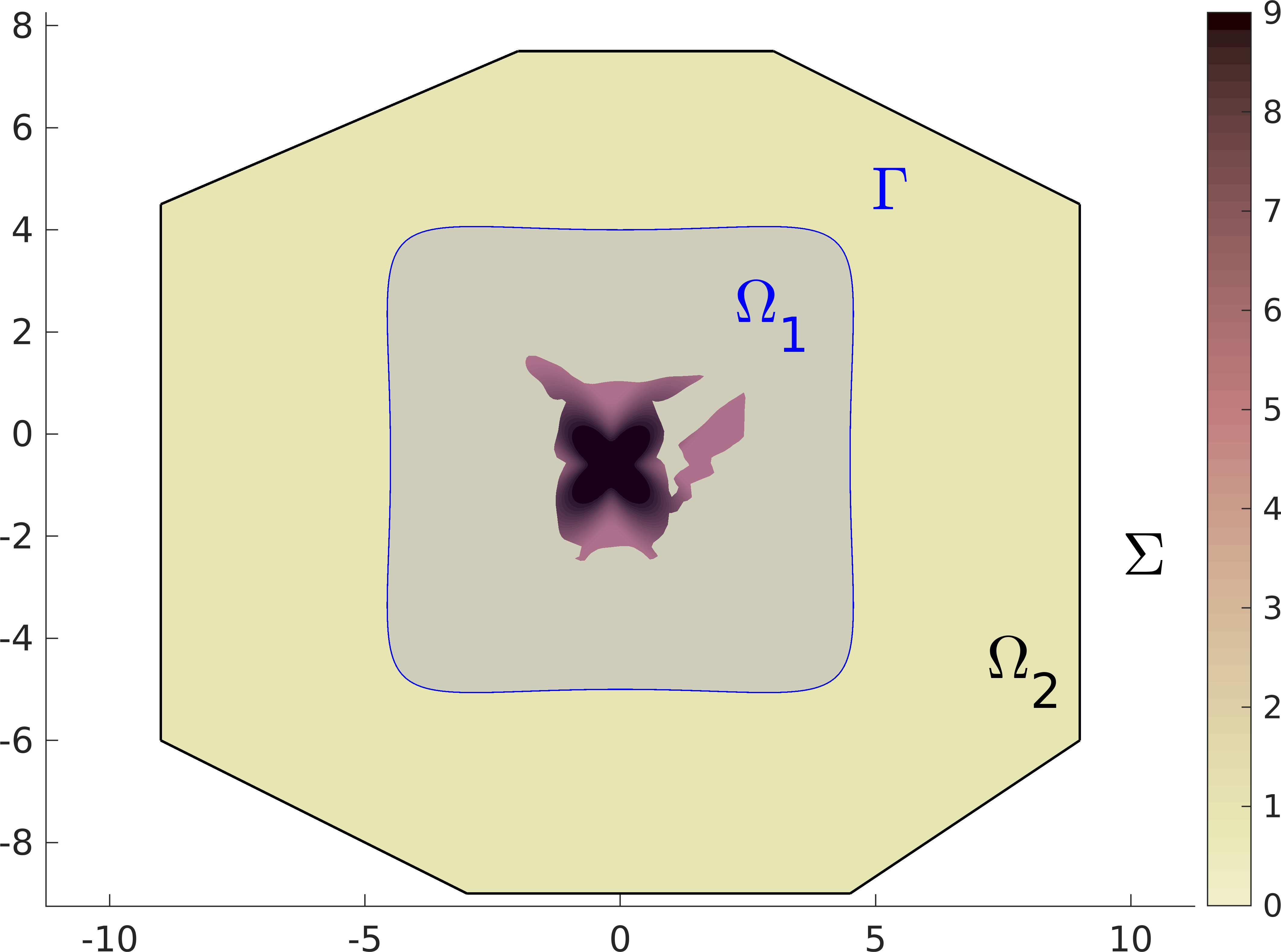}} 
 \caption{\label{fig:expPikachu} Pikachu  heterogeneous domain and artificial boundaries $\Gamma$  and $\Sigma$  for Experiment 2.}
\end{figure}
In  Experiment 2 set of experiments,  we  consider a more complicated non-smooth heterogeneous region  shown in the interior
of  the curved domain $\Omega_1$ in Figure~\ref{fig:expPikachu}. The region $\Omega_0$ is set to be a  polygonal {\em Pikachu}-shaped domain 
with the discontinuous refractive index function 
\[
n^2(x,y) := \begin{cases}
       5 + 4\chi\Big(\frac{1}{0.9}\Big[\frac{r}{2-0.75\cos(4\theta)}-0.025\Big]\Big), &(x,y)\in \Omega_0,\\
        1 , &(x,y)\not\in \Omega_0,\\
         \end{cases}
\]
where $r = \sqrt{(x + 0.18)^2 + (y + 0.6)^2}, \theta = \arctan \! 2((y + 0.6),(x + 0.18))$.
The grids used in our computation, are adapted to the region $\Omega_0$, in such a way that  any triangle $\tau \in{\cal T}_h$ is either contained  or has empty intersection with $\Omega_0$. As the boundary of $\Omega_1$ and  for the smooth curve $\Gamma$ for the exterior model,  we choose  
\[
 {\bf x}(t) = \frac{7\sqrt{2}  }4\Big(   (1+\cos^2 t)\cos t + (1+\sin^2 t)\sin t,(1+\sin^2 t)\sin t-  (1+\cos^2 t)\cos t \Big)
\]
For the interior FEM model, we choose $\Omega_2$ to be a polygonal domain as in Figure~\ref{fig:expPikachu} with boundary $\Sigma$. 
We then proceed as in the previous experiment, using  an initial coarse grid with $8,634$ triangles which is refined up to four times.
The solution $u$ of the model is not smooth {in $\Omega_0$ and $\overline{\Omega}_0^{c}$,
because of the non-smoothness} of the region $\Omega_0$ and {the jump in the refractive index function}. 
One may consider the use of a graded mesh around the boundary of $\Omega_0$ to obtain faster convergence. Based on 
the size of $\Omega_2$, 
the choices  \revA{$k = \pi/4, \pi, 4\pi$} lead to approximately $2.5$, $10$, and $40$ wavelengths interior FEM model, respectively, for simulations in Experiment 2. 

We observe from the  integer numbers (within in parentheses) in Tables~\ref{table:exp02:03}-\ref{table:exp02:05} that  the number of GMRES iterations grow, slower than the  quadruple growth of the three frequencies considered in Experiment 2.  The estimated (relative) maximum far-field errors for the non-smooth  Experiment 2 model are given in Tables~\ref{table:exp02:03}-\ref{table:exp02:05}, demonstrating high-order accuracy of our FEM-BEM model as the finite element space degree, grid size, and the BEM DoF are increased.
 In Figure~\ref{fig:kpi}, for $d =2, 3, 4$, we compare   convergence of the total field in the $H^1$-norm for the  smooth (Experiment 1) and non-smooth (Experiment 2)  simulations.

In Figure~\ref{fig:exp02:01} we depict the simulated wave field  solution for $k =\pi$, with $\mathbb{P}_4$ finite elements on a grid with $138,144$ triangles and \revB{$L=1,106,385$} free-nodes for the FEM solution, and \revB{$2N=320$} for the BEM solution. Specifically, we plot  the simulated 
absorbed and scattered field numerical  solution $u_{h,N}$ inside $\Omega_2$  in  Figure~\ref{fig:exp02:01}.

\begin{table}[th] \small 
 \begin{center} \tt           \small
  \begin{tabular}{r|rl|rl|rl|rl}
    $N/{L}$&
                  \multicolumn{2}{c|}{ 39,085}  &    \multicolumn{2}{c|}{ 69,381}     &      \multicolumn{2}{c|}{ 622,573 }   &       \multicolumn{2}{c}{ 2,488,441}   \\ 
 \hline &&&&&&&\\         
    010  &   2.8e-03 &(015)  & 2.8e-03  &(015)   & 2.8e-03 &(015)     & 2.8e-03   &(015) \\
    020  &   5.8e-05 &(015)  & 8.4e-07  &(015)   & 8.4e-07 &(015)     & 8.4e-07   &(015)\\
    040  &   5.3e-05 &(015)  & 1.0e-07  &(015)   & 6.1e-09 &(015)     & 6.9e-10   &(015)\\
    080  &   5.8e-05 &(015)  & 7.4e-08  &(015)   & 6.5e-09 &(015)     & 4.4e-10   &(015)\\
  \end{tabular} 
  
  \end{center}
 \begin{center} \tt           \small
  \begin{tabular}{r|rl|rl|rl|rl}
   $ N/{L}$&
                  \multicolumn{2}{c|}{ 39,085}  &    \multicolumn{2}{c|}{ 69,381}     &      \multicolumn{2}{c|}{ 622,573 }   &       \multicolumn{2}{c}{ 2,488,441} \\   
 \hline &&&&&&&\\        
    020  &   2.5e+00 &(040)  & 2.5e+00 &(040)   & 2.5e+00 &(040)     & 2.5e+00 &(040)   \\
    040  &   3.8e-03 &(042)  & 2.5e-04 &(042)   & 7.1e-05 &(042)     & 5.2e-05 &(042)   \\
    080  &   3.1e-03 &(042)  & 1.7e-04 &(042)   & 7.1e-06 &(042)     & 2.7e-07 &(042)   \\
    160  &   3.4e-03 &(042)  & 1.4e-04 &(042)   & 7.9e-06 &(042)     & 2.6e-07 &(042)   \\
  \end{tabular} 
  \end{center}
 \begin{center} \tt           \small
  \begin{tabular}{r|rl|rl|rl|rl}
   $ N/{L}$&
                \multicolumn{2}{c|}{ 39,085}  &    \multicolumn{2}{c|}{ 69,381}     &      \multicolumn{2}{c|}{ 622,573 }   &       \multicolumn{2}{c}{ 2,488,441} \\   
 \hline&&&&&& \\       
    040  &    6.8e+00  &(080)  &3.2e+00  &(080)  &   3.7e+00 &(080)   &  3.6e+00 &(080)  \\
    080  &    9.2e+00  &(130) &7.4e-01  &(140) &   2.1e-00 &(139)  &  2.3e+00 &(139)  \\
    160  &    6.7e+00  &(140) &4.6e-01  &(148) &   1.3e-02 &(149)  &  4.1e-04 &(149)  \\
    320  &    6.8e+00  &(140) &4.4e-01  &(148) &   1.1e-02 &(149)  &  2.8e-04 &(149)  
  \end{tabular} 
   \end{center}
\caption{\label{table:exp02:03}Experiment 2:  $\mathbb{P}_3$ Finite element space 
and  $k = \pi/4, \pi, 4\pi$ (top, middle, bottom tables). \revAB{In the first row and the first column, ${L}$ and $2N$ are the number of degrees of freedom used to compute  the FEM  and BEM  solutions, respectively. The number of GMRES iterations required for solving the system, with  residual tolerance of $10^{-8}$, is given within the parenthesis.  Estimated (relative) uniform errors in the far-field are given in columns two to five.}}
\end{table}

\begin{table}[t] \small \small                                                                                    

\begin{center} \tt           \small
   \begin{tabular}{r|rl|rl|rl|rl}         
    $N/{L}$&                  \multicolumn{2}{c|}{ 69,381}  &    \multicolumn{2}{c|}{276,905}     &    \multicolumn{2}{c|}{ 1,106,385}  & \multicolumn{2}{c}{4,423,073}    \\         
 \hline&&&&&& \\       
    010  &    2.8e-03  &(015)      &2.8e-03  &(015)  &   2.8e-03 &(015)   &  2.8e-03 &(015)  \\
    020  &    1.3e-06  &(015)  &8.4e-07  &(015)  &   8.4e-07 &(015)   &  8.4e-07 &(015)  \\
    040  &    1.3e-06  &(015)  &1.6e-07  &(015)  &   6.8e-10 &(015)   &  6.8e-10 &(015)  \\
    080  &    1.3e-06  &(015)  &1.6e-07  &(015)  &   6.9e-10 &(015)   &  6.8e-10 &(015)  \\
  \end{tabular}  
\end{center}  
 \begin{center} \tt           \small                                                                   
  \begin{tabular}{r|rl|rl|rl|rl}                                                                    
    $N/{L}$&                                                                                      
                  \multicolumn{2}{c|}{ 69,381}  &    \multicolumn{2}{c|}{276,905}     & \multicolumn{2}{c|}{1,106,385}  & \multicolumn{2}{c}{4,423,073 }   \\                                                                      
 \hline &&&&&&&\\                                                                                                                                           
    020    &  2.5e+00 &(040)   &  2.5e+00 &(040)  &   2.5e+00 &(040)     &   2.5e+00 &(040)    \\                                            
    040    &  2.8e-04 &(042)   &  4.7e-05 &(042)  &   5.3e-07 &(042)     &   5.2e-05 &(042)    \\                                          
    080    &  1.9e-04 &(042)   &  2.2e-06 &(042)  &   1.8e-07 &(042)     &   6.9e-09 &(042)    \\                                          
    160    &  1.6e-04 &(042)   &  1.1e-06 &(042)  &   6.3e-08 &(042)     &   3.3e-09 &(042)    \\                                          
  \end{tabular}                                                                                       
  \end{center}                                                                                                     
 \begin{center} \tt           \small                                                                               
  \begin{tabular}{r|rl|rl|rl|rl}                                                                                
    $N/{L}$&                  \multicolumn{2}{c|}{ 69,381}  &    \multicolumn{2}{c|}{276,905}     &    \multicolumn{2}{c|}{ 1,106,385}  & \multicolumn{2}{c}{4,423,073}    \\                    
 \hline &&&&&&&\\                                                                                                          
    040    &   1.7e+00 &(080)      &  3.8e+00 &(080)  &   3.6e+00 &(080)   &   3.6e+00 &(080) \\   
    080    &   8.8e-01 &(139)     &  1.8e+00 &(140) &   2.2e+00 &(139)  &   2.3e+00 &(139) \\   
    160    &   5.4e-01 &(147)     &  3.9e-02 &(149) &   4.8e-04 &(149)  &   6.9e-05 &(149) \\   
    320    &   5.4e-01 &(147)     &  3.6e-02 &(149) &   2.9e-04 &(149)  &   8.4e-06 &(149)     
  \end{tabular}                                                                                                    
  \end{center}                                                                    
\caption{\label{table:exp02:05}Experiment 2:  $\mathbb{P}_4$ Finite element space  
and $k = \pi/4, \pi, 4\pi$ (top, middle, bottom tables). \revAB{In the first row and the first column, ${L}$ and $2N$ are the number of degrees of freedom used to compute  the FEM  and BEM  solutions, respectively. The number of GMRES iterations required for solving the system, with  residual tolerance of $10^{-8}$, is given within the parenthesis.  Estimated (relative) uniform errors in the far-field are given in columns two to five.}}
\end{table} 

\clearpage


\begin{figure} [!ht]
\centerline{\includegraphics[width=0.66 \textwidth]{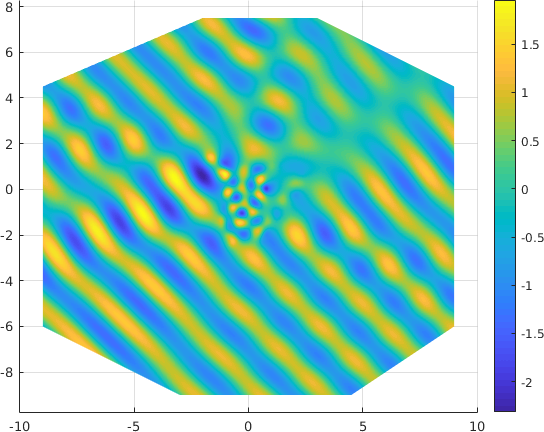}}
\caption{\label{fig:exp02:01}Real part  of the total field FEM  solution $u_h$  in $\Omega_2$ for $k=\pi$.}


 \centerline{\includegraphics[width=0.80\textwidth]{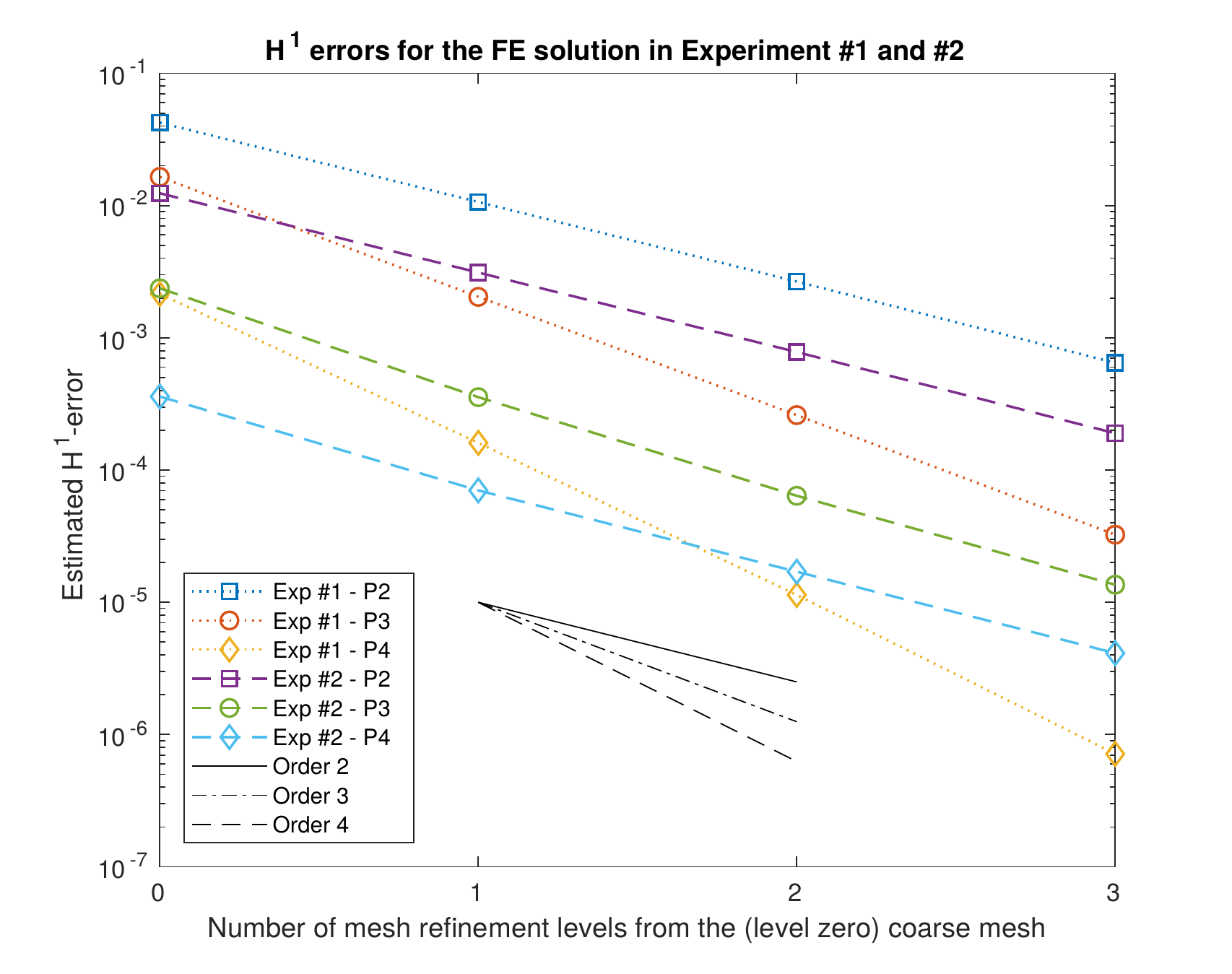}}
 \caption{\label{fig:kpi}Comparisons of convergence of the FEM-BEM algorithm for the total field in  the $H^1(\Omega_2)$-norm for Experiment 1 and 2
 using $\mathbb{P}_2$, $\mathbb{P}_3$ and $\mathbb{P}_4$ elements with $N = 80$ {and $k=\pi/4$}. The bottom part of the figure shows  the expected  order of convergence, as given in~\eqref{eq:fin_res}, for {\em smooth} solutions.}
\end{figure}

\clearpage 
\subsection{Direct solver implementation and comparison with  iterative solver}\label{subsec:dir_iter_comp}
    
In this subsection we  discuss the direct solver implementation of our method and compare its performance with the iterative approach we have used for simulating results described earlier in the section. When computing the matrix  in~\eqref{eq:system:02a}, the main issue is concerned with the matrix  ${\bf \widetilde{K}}_{2N,M}$, 
{which comprises the calculation of finite element solution followed by its evaluation at the nodes of the} {BEM.}
Because of the spectral accuracy of the Nystr\"om  BEM 
approximation, the   DoF $2N$ is expected to be smaller, in practice, even compared to the number $M$ of  FEM boundary  Dirichlet (constrained) nodes
(that is, $M > 2N$). Accordingly, in  our implementation we use instead the representation 
\[
 {\bf \widetilde{K}}_{2N,M}^\top = ( {\bf T}_{ 2N,L}{\bf A}^{-1}_{L}  {\bf D}_{L,M})^\top =
 {\bf D}^\top_{L,M}{\bf L}_L^{-1}{\bf D}_L^{-1}{\bf L}_L^{-\top}  {\bf T}_{ 2N,L}^\top, 
\]
where we recall that   ${\bf A}_{L} ={\bf L}_L {\bf D}_{L}{\bf L}_L^\top$ is symmetric. This representation requires solving $2N$ (independent) finite element problems, one for each {column of $\widetilde{\bf K}_{2N,M}^\top$,  and  a (sparse) matrix-vector multiplication. The first process,  consumes the bulk of computation time (but is a naturally parallel task w.r.t. $N$) and can be carried out with wall-clock time similar to solving one FEM problem~\cite[Section 5.1.5]{GaMor:2016}.

The common CPU time for the direct and iterative solver amounts to the assembly of the finite element matrices ${\bf A}_L$ and ${\bf D}_{L,M}$,  the ${\bf L}  {\bf D}  {\bf L}^\top$ factorization of the former,   the boundary element matrix ${\bf B}_{2N}$  and the auxiliary matrices   ${\bf T}_{2N,L}$ and ${\bf P}_{M,2N}$. Consequently the major difference  in computation between
the two approaches is: (i) the construction and  storage of the matrix in~\eqref{eq:system:02a}, followed by
exactly solving the linear system for the direct method; versus (ii) the setting up of the system~\eqref{eq:system:02a} for  matrix-vector multiplication and
approximately solving the linear system {with}  the GMRES iterations. The former approach is faster especially 
if the number of GMRES iterations is not very low
(in single-digits) because of modern fast multi-threaded implementation of the direct solver. However, the latter approach is memory efficient and needed
especially for large scale 3-D models. 

Using a desktop machine, with a $10$-core processor and $128$GB RAM, we were able to apply the direct solver to simulate the example 2-D models in Experiment 1 and 2, 
even with millions of  FEM (sparse) DoF within our FEM-BEM framework . For one of the largest cases reported
in Table~\ref{table:exp01:09}, {with} $\mathbb{P}_4$ elements for the wavenumber $k = 4\pi$ ($40$ wavelengths case), with 
\[
N = 80, \quad  L = 3,567,105 \quad \text{{(with $445, 440$ triangles), and}}  \quad M = 7,168
\]
the GMRES approach system setup CPU time was $172$ seconds; and the direct approach  setup CPU time was $332$ seconds. Because
of requiring $102$ GMRES iterations, the solve time to compute a converged iterative solution was {\bf 586 seconds}. However,
because of the very efficient multi-threaded direct solvers (in Matlab) the direct solve time to compute the exact solution was only {\bf 0.014 seconds}.

The size of the  interface linear system  for the experiment is only $160 \times 160$ and hence our algorithm   can be very efficiently used
for a large  number of incident waves $u^{{\rm inc}}$, that occur only in the small interface system. 
Thus we conclude that our FEM-BEM framework provides options to apply direct or iterative approaches to efficiently simulate wave propagation
in heterogeneous and unbounded media. For 2-D low and medium frequency models with sufficient RAM, it seems to be efficient even to use
the direct solver, and for higher frequency cases  iterative solvers are efficient because of the demonstrated well-conditioned property of
the system. 

\section{Conclusions}

\revAB{In this article we developed a novel continuous and  discrete computational  framework for an equivalent reformulation and efficient simulation of  
an absorbed and scattered wave propagation model, respectively, in a bounded heterogeneous medium and an unbounded homogeneous free-space. The 
model is governed by the Helmholtz equation and a decay radiation condition at infinity. The decomposed framework  incorporates
the radiation condition exactly and is based on creating two overlapping regions, without truncating the full space unbounded propagation medium. 
The overlapping  framework has the advantage of choosing a smooth artificial boundary for the unbounded region of the reformulation, and a simple polygonal/polyhedral boundary  for the bounded part of the two regions. The advantage facilitates the application of a spectrally accurate BEM for
approximating the scattered wave, and setting up a high-order FEM for simulating the absorbed wave. We prove the equivalence of the
decomposed overlapping continuous framework and the given model. The efficiency of our two-dimensional FEM-BEM computational framework was demonstrated in this work  using  two sets of numerical experiments, one  comprising a smooth  and the other a non-smooth heterogeneous medium.}

\section*{Acknowledgement}

V\'{\i}ctor Dom\'{\i}nguez thanks the support of the project MTM2017-83490-P. Francisco-Javier Sayas was partially supported by the NSF grant DMS-1818867.
\bibliography{biblio}

\bibliographystyle{plain}

\end{document}